\newtheorem{theorem}{Theorem}[section]
\newtheorem{lemma}[theorem]{Lemma}
\newtheorem{corollary}[theorem]{Corollary}
\theoremstyle{definition} \newtheorem{remark}[theorem]{Remark}
\numberwithin{equation}{section}
\newbox\ovlbox
\def\ovl#1{\setbox\ovlbox\hbox{$#1$}\rlap{\kern.5\wd\ovlbox\kern-1.5pt
  $\overline{\hbox to4pt{\hss$\phantom{#1}$\hss}}$\hss}#1}
\def\unl#1{\setbox\ovlbox\hbox{$#1$}\rlap{\kern.5\wd\ovlbox\kern-2.5pt
  $\underline{\hbox to4pt{\hss$\phantom{#1}$\hss}}$\hss}#1}
\def\Ric{{\operatorname{Ric}}} \def\Hess{\operatorname{Hess}}
\def\Sect{{\operatorname{Sect}}} 
 \def\II{{\operatorname{II}}}
\def\tr{\operatorname{tr}}
\def\div{{\operatorname{div}}}
\newcommand{\eps}{\varepsilon}
\newcommand\1{\mathbbm{1}}
 \newcommand\E{\mathbb{E}}
 \newcommand\R{\mathbb{R}}
\def\Ric{{\operatorname{Ric}}} 
 \def\Hess{\operatorname{Hess}}
\def\Eig{\operatorname{Eig}} \def\II{{\operatorname{II}}}
\def\mathpal#1{\mathop{\mathchoice{\text{\rm #1}}%
    {\text{\rm #1}}{\text{\rm #1}}%
    {\text{\rm #1}}}\nolimits} \def\id{{\mathpal{id}}}
\def\bd{{\bf d}}
 \def\r{\right}
\def\l{\left} \def\e{\operatorname{e}} 
 \newcommand{\ptr}{/\!/}
\def\II{{\operatorname{II}}}
 \def\id{{\mathpal{id}}}
  \def\mathpal#1{\mathop{\mathchoice{\text{\rm #1}}%
      {\text{\rm #1}}{\text{\rm #1}}%
      {\text{\rm #1}}}\nolimits}
\begin{document}

 \title[Hessian estimate for eigenfunctions]
  {Hessian estimates for Dirichlet and Neumann eigenfunctions of Laplacian}

  \author{Li-Juan Cheng\textsuperscript{1}*, \quad Anton Thalmaier\textsuperscript{2},\quad Feng-Yu Wang\textsuperscript{3} }

  \address{\textsuperscript{1}School of  Mathematics, Hangzhou Normal University\\
    Hangzhou 311123, The People's Republic of China}
 \address{\textsuperscript{2} Department of Mathematics, University of
      Luxembourg, Maison du Nombre,\\
      L-4364 Esch-sur-Alzette, Luxembourg}
  \address{\textsuperscript{3} Center for Applied Mathematics, Tianjin
      University,\\
       Tianjin 300072, People's Republic of China}

  \email{lijuan.cheng@hznu.edu.cn}
  \email{anton.thalmaier@uni.lu}
  \email{wangfy@tju.edu.cn}
\thanks{*Corresponding author}

%\thanks{$^\sharp$The author contributed equally to this work. }

\begin{abstract}
  By methods of stochastic analysis on Riemannian manifolds, we develop
  two approaches to determine an explicit constant $c(D)$ for an
  $n$-dimensional compact manifold $D$ with boundary such that
  \begin{align*}
    \frac{\lambda}{n}\,\|\phi\|_{\infty} \leq \|\Hess\phi\|_\infty\leq c(D)\lambda \,\|\phi\|_{\infty}
  \end{align*}
  holds for any Dirichlet eigenfunction $\phi$ of $-\Delta$ with
  eigenvalue $\lambda$.  Our results provide the sharp Hessian estimate
$\|\Hess \phi\|_{\infty}\lesssim %\asymp
  \lambda^{\frac{n+3}{4}}$.
  Corresponding Hessian estimates for Neumann
  eigenfunctions are derived in the second part of the paper.
\end{abstract}

\keywords{Brownian motion; eigenfunction; Hessian estimate; curvature;
  second fundamental form} \subjclass[2010]{58J65, 58J50, 60J60}
\date{\today}

\maketitle

\section{Introduction\label{s1}}
Let $D$ be an $n$-dimensional compact Riemannian manifold with smooth 
boundary $\partial D$. We write
$(\phi, \lambda) \in \Eig(\Delta)$ if $\phi$ is a
Dirichlet eigenfunction of $-\Delta$ on $D$ with eigenvalue
$\lambda>0$, i.e.~$-\Delta\phi=\lambda\phi$.
We always assume eigenfunctions $\phi$ to be normalized in
$L^2(D)$ such that $\|\phi\|_{L^2}=1$. According to \cite{SX}, there exist two positive constants
$c_{1}(D)$ and $c_{2}(D)$ such that
\begin{align}\label{esti-eigenfunctions}
  c_{1}(D) \sqrt{\lambda}\,\|\phi\|_{\infty} \leqslant\|\nabla \phi\|_{\infty}
  \leqslant c_{2}(D) \sqrt{\lambda}\,\|\phi\|_{\infty}, \quad(\phi,\lambda)\in\Eig(\Delta),
\end{align}
where we write
$\|\nabla \phi\|_{\infty}:=\|\,|\nabla \phi|\,\|_{\infty}$ for
simplicity.  An analogous statement for Neumann eigenfunctions has
been derived by Hu, Shi and Xui~\cite{HSX}. Subsequently, by methods
of stochastic analysis on Riemannian manifolds, Arnaudon, Thalmaier
and Wang \cite{ATW} determined explicit constants $c_1(D)$ and $c_2(D)$ in
\eqref{esti-eigenfunctions} for Dirichlet and Neumann
eigenfunctions. From this, together with the uniform estimate of $\phi$ (see
\cite{Hormander,Grieser,Levitan}),
\begin{align*}
  \|\phi\|_{\infty}\leq c_D\lambda^{\frac{n-1}{4}}
\end{align*}
for some positive constant $c_D$,
the optimal uniform bound of the gradient writes as
\begin{align*}
  \|\nabla\phi\|_{\infty}\lesssim \lambda ^{\frac{n+1}{4}}.
\end{align*}
Results of this type have been used to study gradient estimates for unit
spectral projection operators and to give a new proof of
H\"{o}rmander's multiplier theorem, see \cite{XuPhDThesis,
  Xu2007,Xu2009}.

Concerning higher order estimates of eigenfunctions, not much is known.
Very recently, Steinerberger \cite{Ster} studied Laplacian
eigenfunctions of $-\Delta $ with Dirichlet boundary conditions on bounded
domains $\Omega \subset \R^n$ with smooth boundary and proved a sharp
Hessian estimate for the eigenfunctions which reads as
\begin{align*}
  \|\Hess \phi\|_{\infty}\lesssim %\asymp
  \lambda^{\frac{n+3}{4}}
\end{align*}
where $$ \|\Hess \phi\|_{\infty}:=\sup\l\{|\Hess \phi(v,v)|(x): x\in \R^n,\ v\in \R^n, \  |v|=1\r\}.$$
To the best of our knowledge, higher order estimates of eigenfunctions for Euclidean domains
first appeared in \cite{Frank-Seiringer} (see Lemma C.1 in the Appendix there
which is easily adapted to cover our situation).

It is a natural question under which geometric assumptions such estimates
extend to compact manifolds (with boundary).
Following the lines of \cite{ATW}, one may ask the question 
how for the Hessian to derive explicit numerical constants $C_1(D)$ and $C_2(D)$ such that
\begin{align}\label{aim-ineq}
  C_{1}(D) {\lambda}\,\|\phi\|_{\infty} \leqslant\|\Hess \phi\|_{\infty} \leqslant C_{2}(D) {\lambda}\,\|\phi\|_{\infty}, \quad(\phi,\lambda) \in \Eig(\Delta).
\end{align}
Note that for eigenfunctions of the Laplacian, one trivially has
\begin{align*}
  |\Hess \phi|\geq \frac1n\,|\Delta \phi|=\frac\lambda{n}\,|\phi|,
\end{align*}
and thus there is always the obvious lower bound
\begin{align*}
\frac{\|\Hess \phi\|_{\infty}}{\|\phi\|_{\infty}}\geq \frac\lambda{n}.
\end{align*}
For this reason, we concentrate in the sequel on upper bounds for
${\|\Hess\phi\|_{\infty}}/{\|\phi\|_{\infty}}$.

In \cite{ATW} a derivative formula for Dirichlet eigenfunctions has been
given from where an upper bound for the gradient of the 
eigenfunction could be derived directly.
Let us briefly describe this method. Assume that $X_t$ is a Brownian motion
on $D\setminus\partial D$ with generator $\frac12\Delta$, and
write $X_t(x)$ to indicate the starting point $X_0=x$.
Then $X{\bf.}(x)$ is defined up to the first hitting time
$\tau_D=\inf\{t>0\colon X_t(x)\in\partial D\}$  of the boundary.
For $x\in\partial D$ we use the convention that $X{\bf.}(x)$ is defined with lifetime
$\tau_D\equiv0$; in this case the subsequent statements usually hold automatically.

Suppose that $Q_t\colon T_xD\rightarrow T_{X_t(x)}D$ is defined by
\begin{align*}
  \text{D}Q_t=-\frac12\Ric^{\sharp}(Q_t)\, d t, \quad  Q_0=\id,
\end{align*}
where $\text{D}:=\ptr_t\,d \,\ptr_t^{-1}$ with
$\ptr_t:=\ptr_{0,t}\colon T_{x}D \rightarrow T_{X_{t}(x)}D$ parallel
transport along $X(x)$ and $\Ric^{\sharp}(v)(w)=\Ric(v,w)$ for $v,w \in TD$. Suppose that
$(\phi,\lambda) \in \Eig(\Delta)$.  Then, for $v\in T_xM$ and any
$k\in C^1_b([0,\infty);\R)$, i.e.,
$k$ bounded with bounded derivative, the process
\begin{align*}
  k(t)\e^{\lambda t/2}\,\langle \nabla \phi (X_t),Q_t(v)\rangle-\e^{\lambda t/2}\phi(X_t)\int_0^t
  \langle \dot{k}(s)Q_s v,\ptr_s dB_s \rangle,\quad t\leq \tau_D
\end{align*}
is a martingale.  From this, by taking expectation, a formula
involving $\nabla\phi$ can be obtained which allows to derive an upper
bound for $|\nabla \phi|$ on $D$ by estimating $|\nabla \phi|$ on the
boundary $\partial D$ and carefully choosing the function $k$.  Along
this circle of ideas, our aim is to establish a similar strategy
for the Hessian of an eigenfunction $\phi$.

In view of the fact that $P_t\phi=\e^{-\lambda t/2}\phi$ where $P_t$
is the semigroup generated by $\frac12\Delta$, we focus first on
martingales which are appropriate for attaining uniform Hessian
estimates of eigenfunctions.  Let us start with some background on
Bismut type formulas for second-order derivatives of heat
semigroups. A second-order differential formula for the heat semigroup
$P_t$ was first obtained by Elworthy and Li \cite{EL94, Li} for a
non-compact manifold, however with restrictions on the curvature of
the manifold.  An intrinsic formula for $\Hess P_tf$ has been given by
Stroock \cite{Stroock} for a compact Riemannian manifold, and a
localized version of such a formula was obtained in \cite{APT, ChCT} adopting 
martingale arguments.  For the Hessian of the Feynman-Kac
semigroup of an operator $\Delta+V$  with a potential function $V$ on manifolds, we refer the reader to \cite{Li2018,Li2016,Th19}. 

For a complete Riemannian manifold $M$ without boundary,
an appropriate version of a Bismut-type Hessian formula gives the following estimate
(see \cite[Corollary 4.3]{ChCT} and Lemma \ref{th3}, or Corollary \ref{th-esti-HS}  with $\sigma_1=\sigma_2=0$):
\begin{align*}
\|\Hess P_tf\|_{\infty}\leq  \l(K_1 \sqrt{t}+\frac{K_2 t}{2}\r)\e^{K_0t}\,\|f\|_{\infty}+\frac{2}{t}\e^{K_0t}\, \|f\|_{\infty}
\end{align*}
where 
\begin{align}\label{K012}
&K_0:=\sup\l\{-{\rm Ric}(v,v)\colon y\in M,\ v\in T_yM,\ |v|=1\r\};\notag\\
& K_1:=\sup\l\{|R|(y)\colon y\in M \r\};\\
&K_2:=\sup\l\{|(\bd^*R+\nabla \Ric)^{\sharp}(v,w)|(y)\colon y\in M,\ v,w\in T_yM,\ |v|=|w|=1\r\} \notag
\end{align}
and 
\begin{align*}
|R|(y):=\sup \l\{\sqrt{\sum_{i,j=1}^nR(e_i, v, w,e_j)^2(y)}: |v|\leq 1, |w|\leq 1\r\}
\end{align*}
for an orthonormal base $\{e_i\}_{i=1}^n$ of $T_yM$.\smallskip 

Thus if $f=\phi$ and $(\phi,\lambda)\in \Eig(\Delta)$, then 
\begin{align*}
\|\Hess \phi\|_{\infty}\leq \l(K_1 \sqrt{t}+\frac{K_2 t}{2}\r)\e^{(K_0+\lambda/2)t}\|\phi\|_{\infty}+\frac{2\e^{(K_0+\lambda/2)t}}{t}\, \|\phi\|_{\infty}
\end{align*}
for any $t>0$. Letting $t=\frac{1}{K_0+\lambda/2}$ then yields the estimate
\begin{align*}
\frac{\|\Hess \phi\|_{\infty}}{\|\phi\|_{\infty}}\leq \l(K_1\sqrt{\frac{2}{2K_0+\lambda}}+ \frac{K_2}{2K_0+\lambda}\r)\e+(\lambda+2K_0)\e.
\end{align*}

To carry over such results to (compact) manifolds $D$ with boundary, the
influence of the boundary has to be studied.  In this paper, we
shall adopt a martingale approach to the Hessian of
Dirichlet eigenfunctions. This approach is based on the construction of a suitable
martingale which builds a relation between $\Hess\phi$ and ${\bf d}\phi$ and then
to estimate $C_2(D)$ in \eqref{aim-ineq} by searching for explicit
constants $C_1$, $C_2$ and $C_3$ such that
\begin{align}\label{Hess-ineq}
  \|\Hess\phi\|_{\infty}\leq C_1\|\Hess \phi\|_{\partial D,\infty}+C_2 \|\nabla\,\phi\|_{\partial D,\infty}+C_3\|\nabla\,\phi\|_{\infty}
\end{align}
where
$\|\Hess \phi\|_{\partial D, \infty}:=\sup_{x\in \partial D} |\Hess
\phi|(x)$ and
$\|\nabla\, \phi\|_{\partial D, \infty}:=\sup_{x\in \partial D}
|\nabla\, \phi|(x)$.  The final estimate for $|\Hess \phi|$ is then
received by combining the last inequality with estimate
\eqref{esti-eigenfunctions} in \cite{ATW}.

Let us start with the general principle behind the construction of the relevant
martingale.  Let $k\in C_b^1([0,\infty);\R)$  and define an
operator-valued process $W^k_t\colon T_xD\otimes T_xD\rightarrow T_{X_t(x)}D$ as
solution to the following covariant It\^{o} equation
\begin{align*}
 {\rm D} W_t^k(v, w)=R(\ptr_td  B_t, Q_t(k(t)v))Q_t(w)-\frac12(\bd^*R+\nabla \Ric)^{\sharp}(Q_t(k(t)v), Q_t(w))\,d  t-\frac12\Ric^{\sharp}(W_t^k(v, w))\,d  t,
\end{align*}
with initial condition $W_0^k(v,w)=0$. Here the operator $\bd ^*R$ is defined by
$\bd^*R(v_1,v_2):=-\tr\nabla{\!\bf.}\,R(\cdot, v_1)v_2$ and thus
satisfies
\begin{align*}
  \langle \bd^*R(v_1,v_2),v_3\rangle=\langle(\nabla_{v_3}\Ric^{\sharp})(v_1),v_2 \rangle-\langle (\nabla_{v_2}\Ric^{\sharp})(v_3),v_1 \rangle
\end{align*}
for all $v_1,v_2,v_3\in T_xD$ and $x\in D$.  Then the process
\begin{align}\label{martingale1}
M_t&:=\e^{\lambda t/2} \Hess \phi \big(Q_t(k(t)v), Q_t(v)\big)+\e^{\lambda t/2} \bd \phi
    (W_t^k(v, v)) \notag\\
  &\quad -\e^{\lambda t/2} \bd \phi(Q_t(v))\int_0^t\langle Q_s(\dot{k}(s)v),
    \ptr_sd  B_s \rangle
\end{align}
is a martingale on $[0,\tau_D]$ in the sense that $(M_{t\wedge\tau_D})_{t\geq0}$ is a globally defined martingale where $\tau_D=\inf\{t>0:X_t(x)\in\partial D\}$ denotes the first hitting time of
$X{\bf.}(x)$ of the boundary $\partial D$. The martingale property of \eqref{martingale1} then allows to establish
an inequality of the type \eqref{Hess-ineq} by equating the expectations at time 0 and at time
$t\wedge\tau_D$. This approach then requires to estimate the boundary values
of $|\bd \phi|$ and $|\Hess \phi|$, in order to obtain the wanted upper
bound for $\|\Hess \phi\|_{\infty}$. To this end, we establish the required 
estimates in Lemmas 2.4-2.5 by using the information on the second fundamental form $\II$ and the second derivative of  $N$, where for
$X, Y\in T_x\partial D$ and $x\in \partial D$, the second fundamental form is defined by
\begin{align*}
  \II(X,Y)=-\langle \nabla_X N, \ Y \rangle. 
\end{align*}
Finally, let
\begin{align}\label{fun-h}
\ell(t):=\ell_{k,\sigma}(t):=\left\{
      \begin{array}{ll}
        \cos{\sqrt{k}t}-\frac{\sigma}{\sqrt{k}}\sin {\sqrt{k}t}, & k> 0, \\
         1-\sigma t, & k= 0, \\
        \cosh{\sqrt{-k}t}-\frac{\sigma}{\sqrt{-k}}\sinh{\sqrt{-k}t}, & k<0,
      \end{array}
    \right.
\end{align}
We state now the first main result of this paper.

 \begin{theorem}\label{main-theorem1}
Let $D$ be a compact Riemannian manifold with smooth
boundary $\partial D$. Let $K_0, K_1, K_2, \sigma$ be non negative constants such that  $\Ric\geq -K_0$, $|R|\leq K_1$ and $|\bd^* R+\nabla \Ric|\leq K_2$ on $D$, and  that $|\II|\leq \sigma$.  Assume that the distance function  $\rho_{\partial}$ is smooth on the tubular neighborhood $\partial_{r_1}D:=\{x\in D: \rho_{\partial }(x)\leq r_1\}$ of $\partial D$. Let $k, \beta, \gamma$ be constants such that  $|\Sect|\leq k$ on $\partial_{r_1}D$, and that
  \begin{align}\label{condition-rho}
    |\nabla (\Delta \rho_{\partial })|\leq \beta,\quad |\Delta^2 \rho_{\partial}|\leq \gamma  \quad 
    \mbox{on}\ \partial_{r_0}D,
  \end{align}
  where $r_0=r_1\wedge \ell^{-1}({1}/{2})$. 
 Then for any non-trivial
$(\phi,\lambda) \in \Eig_N(\Delta)$, 
\begin{align*}
\frac{\|\Hess \,\phi\|_{\infty}}{\|\phi\|_{\infty}}\leq C_{\lambda}(D)\lambda
\end{align*}
where
\begin{align}\label{add-Hess-D1}
C_{\lambda}(D)
&\leq  2(n-1)\e\sigma  \l(\frac{\alpha}{\lambda}+\sqrt{\frac{2}{\pi \lambda}}\r)+{2\alpha}\e^{\frac12 n \sigma r_0+\frac{1}{2}} \max \l\{\sqrt{\frac{1}{\lambda}+\frac{2K_0}{\lambda^2}+\frac{\sigma}{\lambda^2} \left(\frac{n}{r_0}+2\sigma\right)}\,,\, \frac{2\e^{\frac12+\frac12 n \sigma r_0}}{\lambda}\left( \frac{6}{r_0}+2\alpha\right)\r\} \notag\\
&\quad + \frac{\frac{3}{r_0}(\alpha^2+2\beta)+ \frac{6}{r_0^2}\alpha +\gamma}{ \left(\frac{6}{r_0}+2\alpha\right)\lambda}+\frac{\alpha}{\frac{6}{r_0}+2\alpha}\notag\\
&\quad+\sqrt{\e}\l(\frac{\frac{9}{r_0} \alpha + \frac{6}{r_0^2}  + 3\beta+\lambda  +K_1}{ \frac{6}{r_0}+2\alpha}  +\frac{K_2}{4\sqrt{\e}\left( \frac{6}{r_0}+2\alpha\right)^2}\r) \l(\frac{\alpha}{\lambda} +\l(\frac{1}{4}\sqrt{\frac{\pi}{2}}+\sqrt{\frac{2}{\pi}}\r)\sqrt{\frac{1}{\lambda}+\frac{K_0}{\lambda^2}}\r) \notag\\
  &\quad + 4\e^{\frac12 n \sigma r_0+\frac12}  \max \l\{\sqrt{1+\frac{2K_0}{\lambda}+\frac{\sigma}{\lambda} \left(\frac{n}{r_0}+2\sigma\right)}\,,\, \frac{2\e^{\frac12+\frac12 n \sigma r_0}}{\sqrt{\lambda}}\left( \frac{6}{r_0}+2\alpha\right)\r\}\\
  &\qquad\qquad\qquad\qquad\times \l(\frac{\alpha}{\sqrt{\lambda}} +\l(\frac{1}{4}\sqrt{\frac{\pi}{2}}+\sqrt{\frac{2}{\pi}}\r)\sqrt{1+\frac{K_0}{\lambda}}\r),\notag
\end{align}
for $\alpha=2(n-1)\max\{\sigma, k\}$.
\end{theorem}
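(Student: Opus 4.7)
The plan is to exploit the martingale $(M_t)$ defined in \eqref{martingale1}, applied with $k(0)=1$, so that the optional stopping identity $\Hess\phi(v,v) = \E[M_{\tau_D\wedge t}]$ reduces the desired Hessian bound to an estimate of the right-hand side. I will split this expectation according to whether the Brownian trajectory $X{\bf.}(x)$ has left the domain by time $t$ or not. On the event $\{\tau_D\le t\}$, the point $X_{\tau_D}$ lies on $\partial D$, so the first two terms of $M_{\tau_D}$ contribute $\Hess\phi$ and $\bd\phi$ evaluated on $\partial D$; these are what give rise to the $C_1\|\Hess\phi\|_{\partial D,\infty}$ and $C_2\|\nabla\phi\|_{\partial D,\infty}$ terms in the targeted inequality \eqref{Hess-ineq}. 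On the complementary event $\{\tau_D>t\}$, the first term is the only one that still involves $\Hess\phi$ at an interior point; choosing $k(t)$ small enough (in fact vanishing at the time horizon I pick) suppresses this contribution so that all that remains is controlled by $\|\nabla\phi\|_\infty$.

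First I would pin down the operator bounds $|Q_s|\le \e^{K_0 s/2}$ from its defining ODE, and derive a Gronwall-type estimate for $|W_s^k|$ from its covariant SDE which gives a pointwise and $L^2$ bound in terms of $K_1$, $K_2$ and $\sup_{[0,s]}|k|$; the Itô integral $\int_0^s\langle Q_r(\dot k(r)v),\ptr_r\,dB_r\rangle$ is handled by its $L^2$-isometry, contributing $\|\dot k\|_{L^2}\,\e^{K_0 s/2}$. Taking absolute values and using Cauchy--Schwarz, these inputs yield a bound of \eqref{Hess-ineq} form with explicit coefficients in $K_0,K_1,K_2$ and the $L^p$-norms of $k$ and $\dot k$ on $[0,t]$. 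I would then balance $t$ and the shape of $k$ against $\lambda$; the appearance of $\sqrt{2/(\pi\lambda)}$ and $\sqrt{1/\lambda+2K_0/\lambda^2}$ in \eqref{add-Hess-D1} suggests choosing $t\simeq 1/\lambda$ (as in the closed case, where one takes $t=1/(K_0+\lambda/2)$) together with a linear or cosine profile for $k$, in order to make the factor $\e^{\lambda t/2}$ bounded and simultaneously extract the right $\lambda$-dependence.

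Next, to convert the boundary $\|\Hess\phi\|_{\partial D,\infty}$ and $\|\nabla\phi\|_{\partial D,\infty}$ into interior quantities, I invoke the boundary Lemmas~2.4--2.5 (which are proved using the comparison function $\ell_{k,\sigma}(\rho_\partial)$ and the hypotheses $|\II|\le\sigma$, $|\Sect|\le k$, $|\nabla(\Delta\rho_\partial)|\le\beta$, $|\Delta^2\rho_\partial|\le\gamma$ on the tubular neighborhood $\partial_{r_0}D$). These lemmas bound $|\bd\phi|$ and $|\Hess\phi|$ on $\partial D$ by $\|\phi\|_\infty$, $\|\nabla\phi\|_\infty$ and $\lambda\|\phi\|_\infty$, with coefficients involving $\alpha=2(n-1)\max\{\sigma,k\}$, $n\sigma r_0$, $\beta$, $\gamma$, $r_0$; these are precisely the combinations visible in \eqref{add-Hess-D1}. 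Finally I plug the gradient estimate $\|\nabla\phi\|_\infty\le c_2(D)\sqrt\lambda\,\|\phi\|_\infty$ from \cite{ATW} into \eqref{esti-eigenfunctions} (with the explicit constant $c_2(D)$ expressed through the same geometric data) to collapse everything to $\|\phi\|_\infty$.

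The main obstacle will be the bookkeeping. The Hessian martingale \eqref{martingale1} couples three processes ($Q_t$, $W_t^k$, and the Itô integral), each multiplied by $\e^{\lambda t/2}$, and the final constant must simultaneously absorb the curvature growth $\e^{K_0 t}$, the exponential Girsanov-type factor $\e^{n\sigma r_0/2}$ produced by the boundary comparison with $\ell_{k,\sigma}(\rho_\partial)$, the hitting-time tail $\P(\tau_D\le t)$ whose Gaussian contribution yields the $\sqrt{2/(\pi\lambda)}$ factor, and the Jacobi-equation condition $r_0\le\ell^{-1}(1/2)$ that forces the tubular radius to be small. Choosing $k$ and $t$ so that all of these terms grow at worst like $\lambda$ and none like $\lambda^{1+\eps}$ is delicate; the explicit expression \eqref{add-Hess-D1} reflects the outcome of this optimization, and the sharp exponent $\|\Hess\phi\|_\infty\lesssim\lambda^{(n+3)/4}$ then follows by combining $C_\lambda(D)\lambda$ with the uniform bound $\|\phi\|_\infty\le c_D\lambda^{(n-1)/4}$.
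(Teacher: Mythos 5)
Your high-level plan — use the martingale \eqref{martingale1}, take expectations at $0$ and $t\wedge\tau_D$, split according to whether the boundary has been hit, choose $k(s)=(t-s)/t$ so that the interior Hessian term is suppressed at the horizon, estimate $Q$, $W^k$, and the It\^o integral via the curvature hypotheses, feed in the boundary Lemmas and the gradient bound of \cite{ATW}, and optimize $t\simeq 1/\lambda$ — is exactly the structure of the paper's proof. However there are two substantive gaps.

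First, you treat the boundary control of $|\Hess\phi|$ as though Lemmas~\ref{lem0}--\ref{lem1} close the issue by bounding $\|\Hess\phi\|_{\partial D,\infty}$ purely in terms of $\|\phi\|_\infty$ and $\|\nabla\phi\|_\infty$. That is not the case. The tangential Hessian and the pure normal second derivative are indeed controlled by $\|N\phi\|_{\partial D,\infty}$ via $\II$ and Lemma~\ref{lem0}, but the mixed derivative $NX_j(\phi)$ on $\partial D$ requires a genuinely new device: one extends $N$ to a collar, forms $\varphi=\psi(\rho_\partial)\div(\phi N)$, which \emph{satisfies Neumann boundary conditions}, and then applies a Bismut-type gradient estimate for the Neumann semigroup $P_t^N\varphi$ together with the local-time estimate of Lemma~\ref{esti:local-time}. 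This is where the reflecting Brownian motion and the auxiliary function $h$ enter, and the factor $\e^{\frac12 n\sigma r_0}$ in \eqref{add-Hess-D1} is exactly $\|h\|_\infty^\sigma$. Your sketch omits this mechanism entirely. Second, and more importantly, that construction necessarily produces a term $\langle\Hess\phi,\nabla N\rangle$ inside $\Delta\varphi$, so Lemma~\ref{lem3} yields a bound of the form
\begin{equation*}
\|\Hess\phi\|_{\partial D,\infty}\leq (\cdots)\|\phi\|_\infty + (\cdots)\|\nabla\phi\|_\infty + \|h\|_\infty^\sigma\e^{(\cdots)t}\sqrt{t}\,C_4\,\|\Hess\phi\|_\infty,
\end{equation*}
i.e.~the \emph{interior} sup-norm of $\Hess\phi$ reappears on the right-hand side. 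The final inequality is therefore self-referential, and the proof hinges on choosing $t=t_0=1/\max\{\lambda+2K_0+\sigma K_{h,2\sigma},\,4\e\|h\|_\infty^{2\sigma}C_4^2\}$ so that the offending coefficient is $\le\frac12$ and can be absorbed. This is precisely why the two $\max\{\cdots\}$ expressions and the prefactors of $2$ and $4$ appear in \eqref{add-Hess-D1}. Without recognizing and handling this self-reference your argument does not close.
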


\begin{remark}\label{rem0}
1)  Adopting the estimate above, obviously $C_{\lambda}(D)$ is
   decreasing in $\lambda$, and hence
  $C_{\lambda}(D)\leq C_{\lambda_1}(D)$
  where $\lambda_1$ is the first Dirichlet eigenvalue of $-\Delta$
  which gives
  \begin{align*}
   \frac{\|\Hess \phi\|_{\infty}}{\|\phi\|_{\infty}}\leq C_{\lambda_1}(D)\lambda.
  \end{align*}
 
2) If the manifold has constant sectional curvature and mean curvature on $\partial_{r_0}D$, i.e. $H=\theta,\,  \Sect=k$ on $\partial_{r_0}D$, then for $\rho_{\partial}(x)\leq \ell^{-1}(0)\wedge r_0$,
 $$\Delta \rho_{\partial}=\frac{\ell'_{\theta, (n-1)k}}{\ell_{\theta, (n-1)k}}(\rho_{\partial}).$$
 As a consequence, the upper bound of $|\nabla (\Delta \rho_{\partial })|$ and  $|\Delta^2 \rho_{\partial}|$ can be calculated explicitly, as
 \begin{align*}
 |\nabla (\Delta \rho_{\partial })|(x)\leq 4((n-1)k+\sigma^2),\ \ |\Delta^2 \rho_{\partial}|(x)\leq 8\max\big\{\sigma, \sqrt{(n-1)k}\big\}((n-1)k+\sigma^2),
 \end{align*}
 for $\rho_{\partial} (x)\leq i_0\wedge \ell^{-1}({1}/{2})$.
 
 For the general  case, from the  second variation formula of $\rho_{\partial}$ (see \eqref{variation-formula} below) we see that
 further information about $|\nabla \II|$, $|\nabla^2\II|$, $|R|$, $|\nabla R|$ and $|\nabla^2 R|$ on $\partial_{r_0}D$ is needed to derive an upper bound of 
  $|\nabla (\Delta \rho_{\partial })|$ and  $|\Delta^2 \rho_{\partial}|$.
 \end{remark}

Turning now to Hessian estimates for Neumann eigenfunctions, let us 
denote by $\Eig_N(\Delta)$ the set of non-trivial
$(\phi,\lambda)$ for the Neumann eigenproblem, i.e., $\phi$ is
non-constant, $\Delta \phi=-\lambda \phi$ and $N\phi|_{\partial D}=0$
for the unit inward normal vector field $N$ of $\partial D$. Proceeding along the 
previous ideas, the main difference is that we can no longer consider the process only 
up to the first hitting the boundary $\partial D$. When constructing the suitable
martingales, the boundary behaviour of the process must be included a priori.
We will use the reflecting Brownian motion as base process to
 deal with this question.  Due to recent work on Bismut-type Hessian formula for the Neumann
 semigroup \cite{CT22}, we have  the following formula linking $\Hess P_tf$ and $\bd f$ intrinsically:
  \begin{equation*}
    \Hess {P_tf}(v,v)=\E\l[-\bd f(\tilde{Q}_t(v))\int_0^t\langle \tilde{Q}_s(\dot{k}(s)v),
    \ptr_s d  B_s\rangle+ \bd f( \tilde{W}_t^k(v,v))\r],
  \end{equation*}
where $\tilde{Q}$ and $\tilde{W}^k$ are defined in \eqref{eq-tildeQ} and \eqref{eq-tildeW} in Section 
\ref{Neuman-Section}. 
By observing the fact that $P_t\phi=\e^{-\frac{1}{2}\lambda t}\phi$ and estimating $\tilde{Q}{\boldsymbol{.}}$ and $\tilde{W}{\!\boldsymbol{.}}$ carefully under suitable curvature conditions, we obtain the following
theorem which gives an upper estimate for $\Hess\phi$ of the type \eqref{aim-ineq}
with an explicit constant $C_2(D)$.

\begin{theorem}\label{Neumann-them}
Let $D$ be an $n$-dimensional compact Riemannian manifold with
boundary $\partial D$. Let $K_0, K_1, K_2$  be non-negative constants such that $\Ric\geq- K_0$, $|R|\leq K_1$ and
$|\bd^* R+\nabla \Ric|\leq K_2$ on $D$,and let $\sigma_1,\sigma_2,\sigma$ be non-negative constants such that
$-\sigma_1\leq \II\leq \sigma$ and $|\nabla^2 N-R(N)|\leq \sigma_2$ on the
boundary $\partial D$. Assume the distance function $\rho_{\partial }$
to the boundary $\partial D$ is smooth on $\partial_{r_1}D:=\{x\in D: \rho_{\partial }(x)\leq r_1\}$ and let $k$ be constant such that $\Sect\leq k$ on $\partial_{r_1}D$. Then for any non-trivial
$(\phi,\lambda) \in \Eig_N(\Delta)$, 
\begin{align*}
\frac{\|\Hess \,\phi\|_{\infty}}{\|\phi\|_{\infty}}\leq C_{N,\lambda}(D)\lambda
\end{align*}
where
\begin{align*}
C_{N,\lambda}(D)=&\l(1+\frac{K_1+2K_0+2\sigma_1 \left(\frac{n}{r_0}+2\sigma_1\right)}{\lambda}+\frac{K_2+2\sigma_2 \left(\frac{n}{r_0}+2\sigma_1\right)}{\lambda\sqrt{2\lambda+4K_0+4\sigma_1\left(\frac{n}{r_0}+2\sigma_1\right)}}\r)\e^{\frac{3}{2}\sigma_1 n r_1+1}\\
&+\frac{ \sigma_2 n r_0}{2\lambda}\sqrt{2\lambda+4K_0+4\sigma_1\left(\frac{n}{r_0}+2\sigma_1\right)}\e^{\frac{3}{2}\sigma_1 n r_0+1}
\end{align*}
for $r_0=r_1\wedge \ell^{-1}(0)$.
Denoting by $\lambda_1$ the first Neumann eigenvalue of $-\Delta$, then 
\begin{align*}
\frac{\|\Hess \phi\|_{\infty}}{\|\phi\|_{\infty}}\leq C_{N,\lambda_1}(D) \,\lambda.
\end{align*}
\end{theorem}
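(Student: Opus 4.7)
The plan is to insert the Bismut-type Hessian formula for the Neumann semigroup stated just before the theorem into the eigenfunction identity $P_t\phi = \e^{-\lambda t/2}\phi$. For every $x\in D$, unit $v\in T_xD$, $t>0$, and $k\in C^1_b([0,\infty);\R)$, this produces
\begin{equation*}
\Hess\phi(v,v)(x)=\e^{\lambda t/2}\,\E\!\left[-\bd\phi(\tilde Q_t v)\int_0^t\langle \tilde Q_s\dot k(s)v,\ptr_s\,dB_s\rangle+\bd\phi(\tilde W_t^k(v,v))\right].
\end{equation*}
I would bound $|\bd\phi|$ pointwise by $\|\nabla\phi\|_\infty$, apply the Cauchy--Schwarz inequality to the martingale term, and choose the linear weight $k(s)=(1-s/t)_+$, so that $\dot k\equiv-1/t$. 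This reduces everything to estimating $\E|\tilde Q_t v|^2$, $\E\int_0^t|\tilde Q_s v|^2\,ds$, and $\E|\tilde W_t^k(v,v)|$, after which the Neumann gradient estimate $\|\nabla\phi\|_\infty\leq c(D)\sqrt\lambda\,\|\phi\|_\infty$ from \cite{HSX, ATW} finishes the $\|\nabla\phi\|_\infty$ factor.

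Next, I would apply Itô's formula to $|\tilde Q_s v|^2$ and $|\tilde W_s^k(v,v)|$ along the reflecting Brownian motion. The covariant SDEs \eqref{eq-tildeQ}--\eqref{eq-tildeW} in the Neumann setting carry boundary corrections driven by $\II$ for $\tilde Q$ and by $\nabla^2 N-R(N)$ for $\tilde W^k$. Using $\Ric\geq-K_0$, $\II\geq-\sigma_1$ on $\partial D$, $|R|\leq K_1$, $|\bd^*R+\nabla\Ric|\leq K_2$, and $|\nabla^2 N-R(N)|\leq\sigma_2$, a Gr\"onwall argument along the lines of \cite[Corollary 4.3]{ChCT} yields
\begin{equation*}
\E|\tilde Q_t v|^2\leq|v|^2\,\E\exp(K_0 t+2\sigma_1 L_t),
\end{equation*}
together with an analogous control of $\tilde W^k$ in which $K_1\sqrt t$ and $K_2 t$ play the roles they do in the interior version recalled in \eqref{K012}, and in which $\sigma_2$ enters via a $dL_s$--term. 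Here $L_s$ denotes the boundary local time of the reflecting Brownian motion.

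The technical core, and the step I expect to be the main obstacle, is the explicit control of $\E\exp(\alpha L_t)$ with constants that match the ones in $C_{N,\lambda}(D)$. For this I would exploit the distance-to-boundary function $\rho_\partial$, smooth on $\partial_{r_1}D$ by assumption, together with the Laplacian comparison $\Delta\rho_\partial\leq(n-1)\ell'_{0,k}/\ell_{0,k}(\rho_\partial)\leq n/r_0$ on $\partial_{r_0}D$, which holds because $\Sect\leq k$ there and $r_0\leq\ell^{-1}(0)$. Applying the Itô--Tanaka formula to a suitable truncation $h(\rho_\partial(X_s))$ with $h'(0)=-1$, combined with an exponential martingale compensation, gives an estimate of the form
\begin{equation*}
\E\exp(\alpha L_t)\leq \exp\!\left(\tfrac{\alpha n r_0}{2}+\alpha^2\!\left(\tfrac{n}{r_0}+2\sigma_1\right)\!\frac{t}{2}\right),
\end{equation*}
and specializing to $\alpha=3\sigma_1$ and $t=1/\lambda$ produces the $\e^{3\sigma_1 n r_0/2+1}$ prefactor seen in $C_{N,\lambda}(D)$.

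The final assembly is to substitute these three estimates into the formula for $\Hess\phi(v,v)$, use the Neumann gradient bound, take the supremum over unit $v$ and over $x\in D$, and optimize by choosing $t=1/\lambda$ (which yields $\e^{\lambda t/2}=\sqrt\e$). The three groups of interior constants $(K_0,\sigma_1)$, $K_1$, $(K_2,\sigma_2)$ contribute respectively the $1+2K_0/\lambda+2\sigma_1(n/r_0+2\sigma_1)/\lambda$ term, the $K_1/\lambda$ term, and the $K_2/(\lambda\sqrt{2\lambda+\cdots})$ term in the first parenthesis of $C_{N,\lambda}(D)$, while the boundary contribution of $\tilde W^k$ through $\nabla^2 N-R(N)$ gives the separate $\sigma_2 n r_0/(2\lambda)\sqrt{\cdots}\,\e^{3\sigma_1 n r_0/2+1}$ summand. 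The observation that each $\lambda$-dependent summand in the resulting expression is non-increasing in $\lambda$ then upgrades the bound at any given eigenvalue to the uniform statement involving $\lambda_1$.
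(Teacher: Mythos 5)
Your plan follows exactly the paper's route: insert the Bismut-type Hessian formula for the Neumann semigroup (Theorem~\ref{th5}) into $P_t\phi=\e^{-\lambda t/2}\phi$, bound the $\tilde Q$- and $\tilde W^k$-terms under the stated curvature hypotheses, control $\E[\e^{\sigma_1 l_t}]$ by an exponential-supermartingale argument built from a function of $\rho_\partial$, feed in the Neumann gradient bound from \cite{ATW}, and optimize over $t$. In broad strokes this is the paper's proof. However there are two genuine gaps in the way you carry it out.

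First, the time choice. You set $t=1/\lambda$ and justify it by ``$\e^{\lambda t/2}=\sqrt{\e}$''. But $\e^{\lambda t/2}$ is only one of the three $t$-dependent exponential prefactors: there are also $\e^{K_0 t}$ from the Ricci bound on $\tilde Q$ and $\e^{\sigma_1 K_{h,2\sigma_1}t}$ from the local-time moment $\E[\e^{\sigma_1 l_t}]$. With $t=1/\lambda$ these give an extra factor $\e^{(K_0+\sigma_1 K_{h,2\sigma_1})/\lambda}$, which for small $\lambda$ dominates every term in the claimed constant $C_{N,\lambda}(D)$ and, in particular, is not absorbed by the rational expressions $1+2K_0/\lambda+\cdots$. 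The paper instead takes $t=\bigl(\lambda+2K_0+2\sigma_1 K_{h,2\sigma_1}\bigr)^{-1}$ for the Hessian step (and $t=\bigl(\lambda+K_0+\sigma_1 K_{h,2\sigma_1}\bigr)^{-1}$ for the gradient step), precisely so that the full exponential product collapses to $\sqrt{\e}$ in each case, giving the clean $\e\,\|h\|_{\infty}^{3\sigma_1}\leq \e^{1+\frac32\sigma_1 n r_0}$ prefactor. Your $t=1/\lambda$ would give a weaker bound with an extraneous $\lambda$-dependent exponential.

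Second, the $\sigma_2$-term. You observe that $\nabla^2 N-R(N)$ enters the $\tilde W^k$ dynamics via a $dl_s$-integral, but you do not say how to estimate $\E\bigl[\e^{\frac12\sigma_1 l_t}\int_0^t\e^{\frac12\sigma_1 l_s}\,dl_s\bigr]$. This cannot simply be bounded by $\E[\e^{\sigma_1 l_t}]\cdot\E[l_t]$; the paper rewrites the inner integral via $\int_0^t\e^{c l_s}\,dl_s=\tfrac1c(\e^{c l_t}-1)$ (with an $\eps\downarrow 0$ limiting argument to allow $\sigma_1=0$), then splits the resulting $\E[\e^{(\sigma_1+\eps)l_t}]-1$ into a $\log\|h\|_\infty$ piece and a $K_{h,2(\sigma_1+\eps)}\,t$ piece. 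This is where the summand $\tfrac{\sigma_2 n r_0}{2\lambda}\sqrt{\cdots}\,\e^{\frac32\sigma_1 n r_0+1}$ of $C_{N,\lambda}(D)$ actually comes from, and without this computation your outline does not reach the stated expression. A minor further remark: your claimed local-time bound $\E\exp(\alpha L_t)\leq\exp\bigl(\tfrac{\alpha n r_0}{2}+\tfrac{\alpha^2}{2}(\tfrac{n}{r_0}+2\sigma_1)t\bigr)$ does not have the right small-$\alpha$ behaviour (the time coefficient should be linear in $\alpha$ to leading order, as in Lemma~\ref{esti:local-time}), so that step needs to be redone.
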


The remainder of the paper is organized as follows. In Section \ref{Dirichlet-section} we first
show for Dirichlet eigenfunctions
\begin{align}\label{main-ineq1}
\|\Hess \phi\|_{\infty}/\|\phi\|_{\infty}\leq C_{\lambda}(D)\lambda
\end{align} by verifying that
the process \eqref{martingale1} is a martingale, in combination with boundary estimates
for $|\Hess \phi|$. 
Section \ref{Neuman-Section} deals with Neumann eigenfunctions where we give a proof of Theorem \ref{Neumann-them} by using Bismut type Hessian formulae for the Neumann semigroup and an estimate of the local time.

\section{Hessian estimates of Dirichlet eigenfunctions }\label{Dirichlet-section}
This section is  dedicated to  the approach 
described in the Introduction. In fact, the proof of Theorem \ref{main-theorem1}
is  also divided into two steps by first showing  Theorem \ref{main-th0} with some
auxiliary function $h$, which will be constructed in Section \ref{Section-Construction-of-h}.

\subsection{Preliminary}
We start by defining the fundamental martingale which will serve as basis for our method.

\begin{theorem}\label{th1}
  On a compact Riemannian manifold $D$ with
  boundary $\partial D$, let $X{\bf.}(x)$ be a Brownian motion starting
  from $x\in D$ and denote by $\tau_D=\inf\{t\geq0\colon X_t(x)\in\partial D\}$ its first
  hitting time of $\partial D$. Define $Q_t$ and $W_t^k$ as above where
  $k\in C_b^1([0,\infty);\R)$.  Then,
  for $(\phi,\lambda) \in \Eig_N(\Delta)$ and $v\in T_xD$, the process
  \begin{align}\label{martingale-1}
    &\e^{\lambda t/2} \Hess \phi \big(Q_t(k(t)v), Q_t(v)\big)+\e^{\lambda t/2} \bd \phi
      (W_t^k(v, v))\notag\\ 
     &\quad-\e^{\lambda t/2} \bd \phi(Q_t( v))\int_0^t\langle Q_s(\dot{k}(s)v),
     \ptr_sd  B_s \rangle
  \end{align}
  is a martingale on $[0,\tau_D]$.
\end{theorem}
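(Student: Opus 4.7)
The plan is to reduce the claim to the martingale underlying the Bismut-type formula for $\Hess P_t f$ recalled in the introduction, and then to verify that martingale property by direct It\^o computation. For any fixed horizon $T>0$ and any smooth $f$, the results of \cite{ChCT,APT} assert that
\[
N_s:=\Hess(P_{T-s}f)(Q_s(k(s)v),Q_s(v))+\bd(P_{T-s}f)(W_s^k(v,v))-\bd(P_{T-s}f)(Q_s(v))\!\int_0^s\!\langle Q_r(\dot k(r)v),\ptr_r\,dB_r\rangle
\]
is a martingale on $[0,T\wedge\tau_D]$. Specialising to $f=\phi$ and using $P_{T-s}\phi=\e^{-\lambda(T-s)/2}\phi$, one has $N_s=\e^{-\lambda T/2}M_s$, so the process $M_s$ in \eqref{martingale-1} is a martingale on $[0,T\wedge\tau_D]$ for every $T>0$, hence on $[0,\tau_D]$.

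To verify the martingale property of $N_s$ from scratch, I would compute $dN_s$ term by term via It\^o's formula. For a smooth $(0,2)$-tensor $T$ and semimartingales $V_s,W_s\in TD$ above $X_s$ with covariant differentials $\mathrm{D}V_s,\mathrm{D}W_s$,
\[
d\,T(V_s,W_s)=(\nabla T)(\ptr_s\,dB_s)(V_s,W_s)+\tfrac12(\tr\nabla^2 T)(V_s,W_s)\,ds+T(\mathrm{D}V_s,W_s)+T(V_s,\mathrm{D}W_s)+\text{It\^o correction}.
\]
Applying this to $T=\Hess(P_{T-s}\phi)$ with $V_s=Q_s(k(s)v)$, $W_s=Q_s(v)$, and combining with the time-derivative $-\tfrac12\Hess(\Delta P_{T-s}\phi)$ coming from differentiating $P_{T-s}\phi$ in $s$, the finite-variation part of the first summand of $N_s$ splits into (i)+(ii) the Weitzenb\"ock combination $\tfrac12\tr\nabla^2\Hess-\tfrac12\Hess\Delta$ applied to $P_{T-s}\phi$, (iii) two $-\tfrac12\Ric^{\sharp}$ contributions from $\mathrm{D}Q_s$, and (iv) a $\dot k(s)$ piece $\Hess(P_{T-s}\phi)(Q_s(\dot k(s)v),Q_s(v))\,ds$.

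The key geometric input is the Weitzenb\"ock identity for the Hessian of a function: the commutator (i)+(ii) equals a sum of curvature corrections of the schematic form $R\star\Hess$, $\Ric^{\sharp}\star\Hess$, and $(\bd^*R+\nabla\Ric)^{\sharp}\star\bd$ applied to $P_{T-s}\phi$. These are then matched, term by term, by the It\^o differential of $\bd(P_{T-s}\phi)(W_s^k(v,v))$: the stochastic term $R(\ptr_s\,dB_s,\cdot)\cdot$ in $\mathrm{D}W_s^k$ yields, via its cross-variation with the martingale part $\nabla\bd(P_{T-s}\phi)(\ptr_s\,dB_s)$, the $R\star\Hess$ correction; the explicit $-\tfrac12(\bd^*R+\nabla\Ric)^{\sharp}$ drift in $\mathrm{D}W_s^k$ absorbs the $(\bd^*R+\nabla\Ric)^{\sharp}\star\bd$ correction; and the $-\tfrac12\Ric^{\sharp}(W_s^k)$ drift, combined with the standard $1$-form Weitzenb\"ock identity relating $\tr\nabla^2\bd\phi$ to $\bd\Delta\phi$ and $\Ric^{\sharp}(\nabla\phi)$, absorbs both (iii) and the $\Ric^{\sharp}\star\Hess$ correction. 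Finally, the It\^o product rule on the third summand generates a cross-variation between the martingale part of $\bd(P_{T-s}\phi)(Q_s(v))$, which equals $\Hess(P_{T-s}\phi)(\ptr_s\,dB_s,Q_s(v))$, and $\langle Q_s(\dot k(s)v),\ptr_s\,dB_s\rangle$, contributing precisely $\Hess(P_{T-s}\phi)(Q_s(\dot k(s)v),Q_s(v))\,ds$, which cancels (iv). All finite-variation contributions cancel, so $N_s$ is a local martingale.

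Because $D$ is compact and $\phi$ is smooth up to $\partial D$, the processes $Q_s,W_s^k$ and the tensors $\bd\phi,\Hess\phi$ remain bounded on $[0,\tau_D]$, and $k,\dot k$ are bounded by assumption, so the resulting local martingale is a true martingale. The main obstacle in the plan is the precise identification of the curvature corrections produced by the Weitzenb\"ock identity for $\Hess\phi$ with those prescribed in the defining covariant It\^o equation for $W_t^k$; by construction of that equation the matching is exact, and the eigenfunction equation $\Delta\phi=-\lambda\phi$ enters only via $P_{T-s}\phi=\e^{-\lambda(T-s)/2}\phi$ to absorb the time dependence into the global exponential factor $\e^{\lambda t/2}$ appearing in \eqref{martingale-1}.
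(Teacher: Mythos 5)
Your proposal is correct and follows essentially the same route as the paper: reduce to the Bismut-type Hessian martingale for $P_{T-s}f$ on $[0,T\wedge\tau_D]$, specialize to $f=\phi$ and pull out the factor $\e^{-\lambda(T-s)/2}$ coming from $P_{T-s}\phi=\e^{-\lambda(T-s)/2}\phi$, and invoke compactness of $D$ (boundedness of $Q,W^k,\bd\phi,\Hess\phi,k,\dot k$) to upgrade the local martingale to a true one. The only presentational difference is that the paper assembles the $k$-dependent martingale incrementally (starting from the $k\equiv1$ process cited from Thompson, then adding the $\dot k$ and stochastic-integral pieces via It\^o's formula for $\e^{\lambda t/2}\bd\phi(Q_t(v))$), whereas you cite the full $k$-dependent martingale directly and then outline the It\^o/Weitzenb\"ock verification; both are sound.
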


\begin{proof} Due to the compactness of $D$ it is sufficient to check that
\eqref{martingale-1} is a local martingale on $[0,\tau_D)$.
  Fixing a time $T>0$, for $v\in T_xD$, we let 
  \begin{align*}
    N_t(v,v)= \Hess P_{T-t}\phi(Q_t(v), Q_t(v))+(\bd P_{T-t} \phi)(W_t(v,v)),\quad
              t\leq T\wedge\tau_D,
  \end{align*}
where 
\begin{align*}
  W_{t}(v,v)=Q_t\int_0^tQ_{r}^{-1}R\big(\ptr_{r} d B_r, Q_{r}(v)\big)Q_{r}(v)-\frac12 Q_t\int_0^t Q_{r}^{-1}(\bd^* R+\nabla \Ric)^{\sharp}\big(Q_{r}(v), Q_{r}(v)\big)\, d r.
\end{align*}
  Then $N_t(v,v)$ is a local martingale, see for instance the proof of
  \cite[Lemma 2.7]{Thompson2019} with potential $V\equiv 0$.
  Since $(\phi,\lambda)\in \Eig(\Delta)$, we know that
  $P_{T-t}\phi(X_t)=\e^{-\lambda(T-t)/2}\phi(X_t)$ and thus 
  $$\e^{\lambda t/2}\Hess \phi(Q_t(v), Q_t(v))+\e^{\lambda t/2}(\bd \phi)(W_t(v,v))$$
is also a local martingale.
Furthermore, consider
$$N^k_t(v,v):=\e^{\lambda t/2} \Hess \phi(Q_t(k(t)v), Q_t(v))+(\e^{\lambda t/2} \bd \phi)(W^k_t(v,v)).$$ 
According to the definition of $W^k_t(v,v)$, resp.~$W_t(v,v)$, and in
view of the fact that $N_t(v,v)$ is a local martingale, it is easy to
see that
\begin{align*}
  \e^{\lambda t/2} \Hess \phi(Q_t(k(t)v), Q_t(v))+(\e^{\lambda t/2} \bd\phi)(W^k_t(v,v))-\int_0^t \e^{\lambda s/2}\Hess \phi(Q_s(\dot{k}(s)
              v), Q_s(v))\,ds
\end{align*}
is a local martingale as well. From the formula
\begin{align*}
  \e^{\lambda t/2} \bd \phi(Q_t(v))=\bd \phi(v)+\int_0^t \e^{\lambda s/2}(\Hess \phi)
  (\ptr_sd  B_s, Q_s(v))
\end{align*}
it follows that
\begin{align}\label{local-M3a}
  &\int_0^t \e^{\lambda s/2}(\Hess \phi)(Q_s(\dot{k}(s)v), Q_s(v))
    \,ds-\e^{\lambda t/2} \bd \phi(Q_t(v))\int_0^t\langle Q_s(\dot{k}(s)v),
    \ptr_sd  B_s\rangle
\end{align}
is a local martingale. We conclude that
\begin{align*}
 (\e^{\lambda t/2} \Hess \phi)(Q_t(k(t)v), Q_t(v))+(\e^{\lambda t/2} \bd \phi)(W^k_t(v,v))-
 \e^{\lambda t/2} \bd \phi(Q_t(v))\int_0^t\langle Q_s(\dot{k}(s)v),
   \ptr_sd  B_s\rangle
\end{align*}
is a local martingale.
% on $[0, T\wedge \tau_D)$. As $k\in C_b^1([0,\infty))$, we then  complete the proof by letting $T$ go to $\infty$.
\end{proof}

 We shall use the following estimate to proceed with the Hessian formula for $\phi$.

 \begin{lemma}\label{th3}
Assume that  $\Ric\geq -K_0$, $|R|\leq K_1$ and $|\bd^* R+\nabla \Ric|\leq K_2$ on $D$ for non-negative constants $K_0, K_1$ and $K_2$.   
Let $k\in C_b^{1}([0,\infty);\R)$. For $t\geq 0$ and $\delta>0$,
it holds
\begin{align}
  &|Q_t|\leq \e^{K_0t/2}\quad\text{and} \label{Q-est1}\\
  &\E\l[\big|W_{t}^k(v,\dot{k}(t)v)\big|\1_{\{t\leq\tau_D\}}\r]
\leq \l(K_1 \l(\int_0^tk(s)^2\,ds\r)^{1/2}+\frac{K_2}{2}\int_0^t |k(s)|\, ds\r)\,\e^{K_0t} \, |\dot{k}(t)|, \label{Q-est2}
\end{align}
where  $K_0,K_1$ and $K_2$ are defined as in \eqref{K012}.
\end{lemma}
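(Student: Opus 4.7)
\emph{Plan of proof.} The bound \eqref{Q-est1} is immediate from Gronwall: differentiating $|Q_tv|^2$ along $\text{D}Q_t=-\tfrac12\Ric^\sharp(Q_t)\,dt$ and using $\Ric\ge-K_0$ gives
\begin{align*}
\tfrac{d}{dt}|Q_tv|^2=-\Ric(Q_tv,Q_tv)\le K_0|Q_tv|^2,
\end{align*}
and integration yields $|Q_t|\le\e^{K_0 t/2}$.

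For \eqref{Q-est2} the plan is three short steps. First, since the covariant It\^o SDE defining $W_t^k$ is linear in its second argument at fixed $t$, bilinearity gives $W_t^k(v,\dot k(t)v)=\dot k(t)\,W_t^k(v,v)$, so it suffices to bound $\E[|W_t^k(v,v)|\,\1_{\{t\le\tau_D\}}]$ for $|v|=1$. Second, by linearity of the defining SDE in its forcing terms, split $W_t^k(v,v)=A_t+B_t$ with
\begin{align*}
  \text{D}A_t&=R(\ptr_t dB_t,Q_t(k(t)v))Q_t(v)-\tfrac12\Ric^\sharp(A_t)\,dt,\quad A_0=0,\\
  \text{D}B_t&=-\tfrac12(\bd^*R+\nabla\Ric)^\sharp(Q_t(k(t)v),Q_t(v))\,dt-\tfrac12\Ric^\sharp(B_t)\,dt,\quad B_0=0,
\end{align*}
so that $A_t$ carries the martingale forcing and $B_t$ the deterministic forcing, each still $\Ric^\sharp$-damped.

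Third, treat $A_t$ and $B_t$ by scalar Gronwall arguments. For $A_t$, covariant It\^o on $|A_t|^2$ combined with $\Ric\ge-K_0$ and the quadratic-variation estimate $\sum_i|R(\ptr_t e_i,Q_t(k(t)v))Q_t(v)|^2\le K_1^2 k(t)^2|Q_t|^4\le K_1^2 k(t)^2\e^{2K_0t}$ (from the definition of $|R|$ in \eqref{K012} and from \eqref{Q-est1}) gives
\begin{align*}
\tfrac{d}{dt}\E|A_t|^2\le K_0\,\E|A_t|^2+K_1^2\,k(t)^2\,\e^{2K_0t},
\end{align*}
whence $\E|A_t|\le(\E|A_t|^2)^{1/2}\le K_1\,\e^{K_0 t}\l(\int_0^t k(s)^2\,ds\r)^{1/2}$. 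For $B_t$, which has no martingale part, Cauchy--Schwarz on the drift together with $-\Ric\le K_0$ yields $\tfrac{d}{dt}|B_t|\le\tfrac{K_2}{2}|k(t)|\e^{K_0t}+\tfrac{K_0}{2}|B_t|$ pathwise, and Gronwall then produces $|B_t|\le\tfrac{K_2}{2}\,\e^{K_0 t}\int_0^t|k(s)|\,ds$. Adding the two bounds and multiplying by $|\dot k(t)|$ gives \eqref{Q-est2}; the indicator $\1_{\{t\le\tau_D\}}$ is absorbed by stopping the SDEs at $\tau_D$, which is consistent since all of the above bounds are pathwise on $[0,\tau_D]$.

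The main obstacle is the clean decomposition into $A_t$ and $B_t$: the original $-\tfrac12\Ric^\sharp$ damping must be distributed to both summands so that each satisfies a self-contained scalar Gronwall inequality, and care is needed with the covariant It\^o formula for tangent-bundle-valued processes along Brownian paths. Once this split is in place the remaining work is parallel to classical Jacobi-field estimates along stochastic flows, and notably no upper bound on $\Ric$ is required.
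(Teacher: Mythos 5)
Your proof is correct and takes essentially the same route as the paper: the paper simply writes your $A_t+B_t$ split explicitly via the variation-of-constants formula $W_t^k(v,v)=Q_t\int_0^tQ_s^{-1}(\cdots)$, then applies an It\^o drift bound to $|\e^{-K_0t/2}A_t|^2$ (equivalent to your Gronwall inequality for $\E|A_t|^2$) and the pathwise damped-transport bound $|Q_{s,t}|\le\e^{K_0(t-s)/2}$ to the deterministic integral (equivalent to your pathwise Gronwall for $|B_t|$). The reduction from $W_t^k(v,\dot k(t)v)$ to $\dot k(t)W_t^k(v,v)$ by bilinearity is used implicitly by the paper and made explicit by you; no substantive difference.
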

\begin{proof}
The first inequality follows from the lower Ricci curvature bound condition and the definition of $Q_t$.
According to the definition of $W_t^k$, it is easy to see that 
\begin{align*}
W_t^k(v,v)=&Q_t\int_0^t Q_s^{-1} R(\ptr_s d B_s, Q_s(k(s)v))Q_s(v)\\
&-\frac{1}{2}Q_t\int_0^tQ_s^{-1}(\bd^*R+\nabla \Ric)^{\sharp}(Q_s(k(s)v), Q_s(v))\, ds.
\end{align*}
Note that for $0\leq s\leq t$, the damped parallel transport $Q_{s,t}=Q_{t}Q_s^{-1}\colon T_{X_s}D \rightarrow T_{X_t}D$ satisfying 
\begin{align*}
DQ_{t,s}=-\frac{1}{2}\Ric^{\sharp}(Q_{t,s})\, dt, \qquad Q_{s,s}=\id,
\end{align*}
Thus the lower bound of Ricci curvature  $-K_0$  yields
$$|Q_{s,t}|\leq \e^{K_0(t-s)/2}.$$
Then we have
\begin{align}\label{L1-Wt}
\E\l(|W_t^k(v,v)| \1_{\{t\leq \tau_D\}}\r) 
&\leq \E\l[\1_{\{t\leq \tau_D\}} \big|Q_t\int_0^t Q_s^{-1} R(\ptr_s d B_s, Q_s(k(s)v))Q_s(v)\big|\r] \notag\\
&\quad + \frac{1}{2} \E\l[\1_{\{t\leq \tau_D\}}\big|Q_t\int_0^tQ_s^{-1}(\bd^*R+\nabla \Ric)(Q_s(k(s)v), Q_s(v))\, ds\big| \r] \notag\\
&\leq  \e^{\frac{K_0t}{2}} \E\l[\1_{\{t\leq \tau_D\}} \big| \e^{-\frac{K_0t}{2}}Q_t\int_0^t Q_s^{-1} R(\ptr_s d B_s, Q_s(k(s)v))Q_s(v)\big|^2\r] ^{1/2} \notag\\
&\quad + \frac{K_2}{2} \E\l[\1_{\{t\leq \tau_D\}}\big|\e^{\frac{1}{2}K_0t}\int_0^t\e^{\frac{1}{2}K_0s}|k(s)|\, ds\big| \r]. 
\end{align}
Moreover, 
\begin{align*}
&  d \Big|\e^{-\frac{1}{2}K_0t }Q_t\int_0^t Q_s^{-1} R(\ptr_s d B_s, Q_s(k(s)v))Q_s(v)\Big|^2\\
  &=2\e^{-K_0t}\Big\langle R(\ptr_t d B_t,Q_t(k(t)v))Q_t(v), Q_t\int_0^t Q_s^{-1} R(\ptr_s d B_s, Q_s(k(s)v))Q_s(v)\Big\rangle\\
&\quad+\e^{-K_0t}\big|R^{\sharp,\sharp}(Q_t(k(t)v), Q_t(v))\big |_{\rm HS}^2\, d t\\
&\quad-\e^{-K_0t}\Ric\l(Q_t\int_0^t Q_s^{-1} R(\ptr_s d B_s, Q_s(k(s)v))Q_s(v), Q_t\int_0^t Q_s^{-1} R(\ptr_s d B_s, Q_s(k(s)v))Q_s(v)\r)\,d t \\
&\quad-K_0\e^{-K_0t}\big|Q_t\int_0^t Q_s^{-1} R(\ptr_s d B_s, Q_s(k(s)v))Q_s(v)\big|^2\, dt \\
& \overset{m}{\leq } \e^{-K_0t}\big|R^{\sharp,\sharp}(Q_t(k(t)v),Q_t(v))\big|_{\rm HS}^2\, d t\leq K_1^2\e^{-K_0t}|Q_t|^4 k(t)^2 \, d t\leq K_1^2\e^{K_0t}k(t)^2 \, d t, \qquad    t\leq \tau_D.
\end{align*}
Combining this with  \eqref{L1-Wt},  we have 
\begin{align*}
\E\l(|W_t^k(v,v)| \1_{\{t\leq \tau_D\}}\r) 
& \leq K_1 \e^{\frac{1}{2}K_0t} \l(\int_0^t \e^{K_0s}k(s)^2\, ds\r)^{1/2} +\frac{K_2}{2} \e^{K_0t}  \int_0^t|k(s)|\,ds.
\end{align*}
We then complete the proof.
\end{proof}

By the results above, the following Hessian formula for eigenfunctions
$\phi$ is obtained.

\begin{theorem}\label{th0}
Let $D$ be a compact Riemannian manifold with
  boundary $\partial D$. Let $X{\bf.}(x)$ be a Brownian motion starting
  from $x\in D$ and $\tau_D$ be its first hitting time of $\partial D$.
Suppose that $k$ is a  non-negative function in $C^1_\text{b}([0,\infty);\R)$
such that $k(0)=1$. Then for $(\phi,\lambda)\in \Eig(\Delta)$, $t\geq 0$ and
$v\in T_xD$, 
\begin{align}\label{Hessian-formula-local_0}
(\Hess \phi)(v,v)&=\E^x\l[\e^{(t\wedge \tau_D)\lambda/2}(\Hess \,\phi)(Q_{t\wedge \tau_D}(k(t\wedge \tau_D)v), Q_{t\wedge \tau_D}(v))+\e^{(t\wedge \tau_D)\lambda/2}({\bf d} \phi)
(W^k_{t\wedge \tau_D}(v,\,v))\r] \notag\\
&\quad-\E^x\l[\e^{(t\wedge \tau_D)\lambda/2}\bd \phi(Q_{t\wedge \tau_D}(v))\int_0^{t\wedge \tau_D}\langle Q_s(\dot{k}(s)v),
\ptr_sd  B_s \rangle\r].
\end{align}
\end{theorem}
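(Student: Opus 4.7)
The plan is to read the Hessian formula off the martingale identity of Theorem \ref{th1} by equating expectations at times $0$ and $t\wedge\tau_D$. Let $M_s$ denote the process in \eqref{martingale-1}. By Theorem \ref{th1}, $(M_{s\wedge\tau_D})_{s\geq 0}$ is a local martingale, so what remains is (i) to evaluate $M_0$ and (ii) to upgrade this local martingale property to a true martingale on the bounded interval $[0,t\wedge\tau_D]$ so that optional stopping applies.

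For step (i), at $s=0$ one has $Q_0=\id$, $k(0)=1$, and $W_0^k(v,v)=0$ from the initial condition of the covariant It\^o equation defining $W^k$; moreover the stochastic integral $\int_0^0\langle Q_s(\dot k(s)v),\ptr_s\,dB_s\rangle$ vanishes. Hence $M_0=(\Hess\phi)(v,v)$, which is exactly the left-hand side of \eqref{Hessian-formula-local_0}.

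For step (ii), compactness of $D$ gives uniform pointwise bounds on $\phi$, $\bd\phi$ and $\Hess\phi$; combined with $|Q_s|\leq \e^{K_0 s/2}$ from \eqref{Q-est1} and the boundedness of $k$, this controls the first term of $M$ in $L^\infty$. The argument used to prove \eqref{Q-est2} in Lemma \ref{th3}, applied with integrand $k(s)$ in place of $\dot k(s)$, yields an $L^1$-bound on $\bd\phi\bigl(W_{s\wedge\tau_D}^k(v,v)\bigr)$ uniformly in $s\in[0,t]$. Finally, by It\^o's isometry the Wiener integral $\int_0^s\langle Q_r(\dot k(r)v),\ptr_r\,dB_r\rangle$ has finite second moment uniformly in $s\in[0,t]$, and multiplied by the uniformly bounded factor $\e^{\lambda s/2}\bd\phi(Q_s(v))$ remains in $L^1$. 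A standard localization argument — reducing by a sequence $\tau_n\uparrow\infty$ that makes $M_{\cdot\wedge\tau_D\wedge\tau_n}$ a genuine martingale, and then passing to the limit via dominated convergence — yields $\E^x[M_{t\wedge\tau_D}]=M_0=(\Hess\phi)(v,v)$. Rearranging the three terms of $M_{t\wedge\tau_D}$ gives precisely \eqref{Hessian-formula-local_0}.

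The only substantive step beyond Theorem \ref{th1} and Lemma \ref{th3} is this integrability verification, which is routine thanks to the compactness of $D$ and the explicit $L^1$-bounds of Lemma \ref{th3}. Note that the Dirichlet boundary condition $\phi|_{\partial D}=0$ is not needed for \eqref{Hessian-formula-local_0} itself — it only enters later, via the stopping at $\tau_D$, when the formula is combined with boundary estimates of $|\bd\phi|$ and $|\Hess\phi|$ to establish Theorem \ref{main-theorem1}.
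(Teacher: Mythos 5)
Your proposal is correct and follows essentially the same approach as the paper: evaluate the martingale of Theorem~\ref{th1} at times $0$ and $t\wedge\tau_D$ and equate expectations, with $Q_0=\id$, $k(0)=1$, $W_0^k=0$ giving $M_0=(\Hess\phi)(v,v)$. The paper leaves the integrability step implicit — Theorem~\ref{th1} asserts the true martingale property citing compactness, and the proof of Theorem~\ref{th0} merely notes $|Q_t|\leq\e^{K_0 t/2}$ — so your explicit verification via the bounds of Lemma~\ref{th3} and dominated convergence is filling in exactly what the paper takes for granted.
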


\begin{proof}
  
The claim follows by taking expectation of the martingale \eqref{martingale-1}
at time $0$ and $t\wedge \tau_D$. Recall that $|Q_t|\leq \e^{K_0t/2}$.
For $x\in\partial D$ formula \eqref{Hessian-formula-local_0} is obviously
tautological since $\tau_D\equiv 0$.
  %  If $x\in \partial D$, then $\tau_D\equiv 0$ and the formula holds automatically.
% If $x\in D\setminus \partial D$, by Lemma \ref{th3} and $|Q_t|\leq \e^{\frac{1}{2}K_0t}$,
% then we take the expectation of the local martingale \eqref{martingale-1} at time $0$
% and $t\wedge \tau_D$ and then have the formula \eqref{Hessian-formula-local_0}. 
\end{proof}

To derive Hessian estimates of $\phi$ from Theorem \ref{th0} requires estimates of
$\Hess\phi$ on the boundary $\partial D$.
To this end, we first note the following observation. Since $\phi=0$ on the boundary $\partial D$, 
we have $\nabla\phi=N(\phi)N$.

\begin{lemma}\label{lem0}
For $x\in \partial D$ let $H(x)$ be the mean curvature of the boundary. Then
\begin{align*}
N^2(\phi)(x)=-H(x)N(\phi)(x),\quad x\in \partial D.
\end{align*}
\end{lemma}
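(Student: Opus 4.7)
The plan is to use that the Dirichlet condition $\phi|_{\partial D}=0$ together with the eigenvalue equation $-\Delta\phi=\lambda\phi$ forces $\Delta\phi\equiv 0$ on $\partial D$, and then to expand this vanishing Laplacian at a fixed $x\in\partial D$ in an orthonormal frame $\{e_1,\dots,e_{n-1},N\}$ of $T_xD$ with $\{e_i\}_{i=1}^{n-1}\subset T_x\partial D$. This reduces the claim to computing two pieces separately: the tangential trace $\sum_{i=1}^{n-1}\Hess\phi(e_i,e_i)(x)$ and the normal term $\Hess\phi(N,N)(x)$.

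For the normal piece I would extend $N$ to a tubular neighborhood as $\nabla\rho_\partial$ (smooth on $\partial_{r_1}D$ by assumption); then $|N|\equiv 1$ forces $\nabla_NN\equiv 0$, so $\Hess\phi(N,N)(x) = N^2(\phi)(x)$. For a tangential pair $X,Y\in T_x\partial D$ the key input is $\phi|_{\partial D}=0$: extending $Y$ tangentially, $Y\phi$ vanishes on $\partial D$, so $X(Y\phi)(x)=0$. Then $\Hess\phi(X,Y)(x) = -(\nabla_XY)\phi(x)$, and since the tangential component of $\nabla_XY$ annihilates $\phi$ on the boundary, only the normal component survives: $\Hess\phi(X,Y)(x) = -\langle\nabla_XY,N\rangle\, N(\phi)(x)$. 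Using $0=X\langle Y,N\rangle$ together with the paper's convention $\II(X,Y)=-\langle\nabla_XN,Y\rangle$ gives $\langle\nabla_XY,N\rangle=\II(X,Y)$, whence $\Hess\phi(X,Y)=-\II(X,Y)\,N(\phi)$ on $\partial D$. Tracing over $\{e_i\}_{i=1}^{n-1}$ then produces $\sum_{i=1}^{n-1}\Hess\phi(e_i,e_i)(x) = H(x)\,N(\phi)(x)$ with the mean curvature convention $H=-\tr_{T\partial D}\II=\div N|_{\partial D}$; summing with $N^2(\phi)(x)$ and using $\Delta\phi(x)=0$ yields the formula.

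The argument is a pointwise computation, so no real analytic obstacle arises. The only delicate point is the sign bookkeeping: the stated identity $N^2(\phi)=-H\,N(\phi)$ (rather than $+H\,N(\phi)$) depends on adopting the convention $H=-\tr\II$, which is the one compatible with the paper's sign convention for $\II$ and with the identification $\Delta\rho_\partial|_{\partial D}=H$ implicit in Remark \ref{rem0}.
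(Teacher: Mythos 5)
Your proposal is correct and takes essentially the same approach as the paper: both start from $0=\Delta\phi(x)$ for $x\in\partial D$ (using the Dirichlet condition and the eigenvalue equation) and identify $\div N|_{\partial D}=H$. The paper packages the computation as $\div(\nabla\phi)=\div(N(\phi)N)=N^2\phi+H\,N(\phi)$, while you expand $\tr\Hess\phi$ in an orthonormal frame and absorb the tangential block via $\II$; the two are the same calculation in slightly different bookkeeping, with yours making explicit the identity $\Hess\phi(X,Y)=-\II(X,Y)\,N(\phi)$ on $T\partial D$ that the paper reuses later in \eqref{add-Hess-boundary1}.
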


\begin{proof}
  For $x\in \partial D$, we have
  \begin{align*}
    0&=\lambda \phi(x)=\Delta \phi(x)\\
     &=\div (\nabla \phi)(x)=\div(N(\phi)N)(x)\\
     &=\langle \nabla N(\phi), N \rangle(x)+N(\phi)\div(N)(x).
  \end{align*}
  Taking into account that $\div(N)(x)=H(x)$, the proof is completed.
\end{proof}

The following lemma is taken from \cite[Proposition 2.5]{ATW} and
allows to estimate the values of $|\nabla \phi|$ on the boundary.

\begin{lemma}\label{lem1}
  Let $\alpha_0\in \mathbb{R}$ such that
  \begin{align}\label{Deltarho}
   \Delta\rho_{\partial }\leq \alpha_0
  \end{align}
  outside ${\rm Cut}(\partial D)$. 
  Then for any $t>0$,
  \begin{align*}
   \|\nabla \phi\|_{\partial D, \infty}
    =\|N(\phi)\|_{\partial D,\infty}\leq \|\phi\|_{\infty}\e^{\lambda
      t/2}\left(\alpha_0^++\frac{\sqrt{2}}{\sqrt{\pi t}}\right).
      \end{align*}
   In particular, 
   \begin{align}\label{esti-Nphi}
   \|\nabla \phi\|_{\partial D, \infty}
   \leq \|\phi\|_{\infty}\e^{1/2}\left(\alpha_0^++\frac{\sqrt{2\lambda}}{\sqrt{\pi }}\right).
      \end{align}
    \end{lemma}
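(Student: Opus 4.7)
The plan is to bound $|N(\phi)(x)|$ at boundary points $x\in\partial D$ by combining the probabilistic representation of the Dirichlet heat semigroup with a one-dimensional comparison for the boundary distance function. Since $\phi|_{\partial D}=0$, one has $\nabla\phi=N(\phi)\,N$ on $\partial D$, giving immediately the identity $\|\nabla\phi\|_{\partial D,\infty}=\|N(\phi)\|_{\partial D,\infty}$.

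Using the eigenfunction relation $\phi=\e^{\lambda t/2}P_t\phi$ together with the probabilistic representation $P_t\phi(y)=\E^y[\phi(X_t)\1_{\{t<\tau_D\}}]$ of the Dirichlet semigroup (paths killed upon hitting $\partial D$), one obtains for every $y\in D$ and $t>0$
\begin{align*}
|\phi(y)|\leq \e^{\lambda t/2}\|\phi\|_\infty\,\P^y(\tau_D>t).
\end{align*}
Fix $x\in\partial D$ and set $y_\varepsilon:=\exp_x(\varepsilon N(x))$ for small $\varepsilon>0$, so that $\rho_{\partial}(y_\varepsilon)=\varepsilon$. Since $\phi(x)=0$, we have $N(\phi)(x)=\lim_{\varepsilon\to 0^+}\phi(y_\varepsilon)/\varepsilon$, which reduces the claim to the estimate
\begin{align*}
\limsup_{\varepsilon\to 0^+}\frac{\P^{y_\varepsilon}(\tau_D>t)}{\varepsilon}\leq \alpha_0^+ +\sqrt{\frac{2}{\pi t}}.
\end{align*}

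This key estimate is proved by a pathwise comparison with a shifted Brownian motion. Applying It\^{o}'s formula to $\rho_{\partial}(X_{s\wedge\tau_D})$ on the tubular neighborhood where $\rho_{\partial}$ is smooth, and using $|\nabla\rho_{\partial}|=1$ together with the hypothesis $\Delta\rho_{\partial}\leq\alpha_0$, yields
\begin{align*}
\rho_{\partial}(X_{s\wedge\tau_D})\leq \varepsilon+\beta_{s\wedge\tau_D}+\frac{\alpha_0^+}{2}(s\wedge\tau_D)=:\tilde{Y}_{s\wedge\tau_D},
\end{align*}
where $\beta$ is a standard $1$-dimensional Brownian motion. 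Setting $\tilde{\tau}_0:=\inf\{s\geq 0:\tilde{Y}_s=0\}$, the sandwich $0\leq\rho_{\partial}(X_s)\leq\tilde{Y}_s$ for $s\leq\tau_D$ forces $\tau_D\leq\tilde{\tau}_0$, whence $\P^{y_\varepsilon}(\tau_D>t)\leq\P(\tilde{\tau}_0>t)$. The latter survival probability for Brownian motion with constant drift $\alpha_0^+/2$ starting at $\varepsilon$ admits a closed-form expression via the reflection principle, and a direct calculation of its derivative at $\varepsilon=0^+$ produces the bound $\alpha_0^+ +\sqrt{2/(\pi t)}$. Specializing to $t=1/\lambda$ yields the second inequality.

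The main technical delicacy lies in the possible failure of smoothness of $\rho_{\partial}$ at the cut locus $\mathrm{Cut}(\partial D)$, where the It\^{o} formula above is not directly applicable. This is handled in the standard way by stopping the comparison at the first exit from a tubular neighborhood of $\partial D$ and checking that contributions from paths leaving the neighborhood become negligible in the limit $\varepsilon\to 0^+$, or equivalently by invoking a barrier-sense Laplacian comparison for the distance function, as is standard in stochastic Riemannian geometry.
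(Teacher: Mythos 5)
Your proof is correct and follows essentially the same probabilistic strategy underlying the reference the paper cites for this lemma (\cite[Proposition~2.5]{ATW}): read off the normal derivative $N(\phi)(x)$ as a boundary limit $\phi(y_\varepsilon)/\varepsilon$, use $\phi=\e^{\lambda t/2}P_t\phi$ with the Dirichlet (killed) semigroup to reduce to the survival probability $\P^{y_\varepsilon}(\tau_D>t)$, compare $\rho_{\partial}(X_s)$ with the one-dimensional drifted Brownian motion $\tilde Y_s=\varepsilon+\beta_s+\tfrac12\alpha_0^+ s$, and differentiate the closed-form hitting probability at $\varepsilon=0^+$. Your computation of that derivative, namely $\frac{2}{\sqrt t}\varphi(\mu\sqrt t)+2\mu\Phi(\mu\sqrt t)\le \sqrt{2/(\pi t)}+\alpha_0^+$ with $\mu=\alpha_0^+/2$, is correct, and specializing $t=1/\lambda$ gives \eqref{esti-Nphi}.

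One caveat on the cut-locus handling. The first remedy you sketch, stopping at the first exit time $\tau_r$ of the tubular neighborhood $\partial_rD$ and arguing that escaping paths contribute negligibly, does not actually close: the probability of reaching $\{\rho_{\partial}=r\}$ before $\{\rho_{\partial}=0\}$ from $\rho_{\partial}=\varepsilon$ is itself $\Theta(\varepsilon)$ for each fixed $r$, so after dividing by $\varepsilon$ it leaves an $O(1)$ error that destroys the sharp constant. What really makes the pathwise domination $\rho_{\partial}(X_s)\le\tilde Y_s$ global is your second alternative: the Kendall--Cranston semimartingale decomposition of the radial process across the cut locus (equivalently, Calabi's barrier/viscosity Laplacian comparison), which shows that crossing $\mathrm{Cut}(\partial D)$ only contributes an extra \emph{nonincreasing} term, so the supermartingale comparison $\rho_{\partial}(X_s)-\tfrac12\int_0^s\Delta\rho_{\partial}(X_u)\,du-\beta_s$ survives with the right sign. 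With that substitution, $\tau_D\le\tilde\tau_0$ holds pathwise without any localization and the proof is complete.
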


\begin{remark}
   With constants $K_0,\theta>0$ such that $\Ric\geq -K_0$ on $D$ and
  $H\geq -\theta$ on the boundary $\partial D$, where $H(x)$ is the
  mean curvature of $D$ at $x\in D$, let
  \begin{align*}
    \alpha_0=\max\l\{\theta, \sqrt{(n-1)K_0}\,\r\}.
  \end{align*}
  Then estimate \eqref{Deltarho} holds true for this $\alpha_0$.
 \end{remark}

Next, we introduce some results on local time estimate of reflecting Brownian motion, which is also  a tool in the boundary estimate of $|\Hess \phi|$. Let us recall some basic notations on it.
The reflecting Brownian motion on $D$ with generator $\frac12\Delta$ satisfies the SDE
$$d X_t=\ptr_t\circ\,d B_t^x+\frac12N(X_t)\, d l_t,\quad X_0=x,$$
where $B_t^x$ is a standard Brownian motion on the Euclidean space $T_xD\cong \R^n$ and $l_t$ is
the local time supported on $\partial D$ (see \cite{Wbook2} for details).
Now we turn to the problem of estimating $\E[\e^{\alpha l_t/2}]$ for $\alpha>0$ by exploiting a specific class of functions $h$. 

\begin{lemma}\label{esti:local-time}
Suppose that $h\in C^{\infty}(D)$ such that $h\geq 1$ and $N\log h \geq 1$. For $\alpha>0$ let
$$K_{h,\alpha}=\sup\big\{-\Delta\log h+\alpha |\nabla \log h|^2\big\}.$$
 Then 
\begin{align*}
\mathbb{E}[\e^{\alpha l_t/2}]\leq \|h\|_{\infty}^{\alpha}\exp\l(\frac{\alpha}{2}K_{h,\alpha} t\r).
\end{align*}
\end{lemma}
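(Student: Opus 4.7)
The plan is to apply Itô's formula to $\log h(X_t)$ along the reflecting Brownian motion, use the boundary condition $N\log h\ge 1$ to dominate $l_t$, and then carry out an exponential-martingale argument to remove the stochastic integral.

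More precisely, I would first observe that by Itô's formula applied to the reflecting Brownian motion $X_t$ with generator $\tfrac12\Delta$ and boundary drift $\tfrac12 N\,dl_t$, one has
\begin{align*}
\log h(X_t)-\log h(X_0)
&= M_t+\frac12\int_0^t\Delta\log h(X_s)\,ds+\frac12\int_0^tN\log h(X_s)\,dl_s,
\end{align*}
where $M_t:=\int_0^t\langle\nabla\log h(X_s),\ptr_s\,dB_s^x\rangle$ is a local martingale with $\langle M\rangle_t=\int_0^t|\nabla\log h|^2(X_s)\,ds$. Since $l_s$ is supported on $\{X_s\in\partial D\}$ and $N\log h\ge 1$, the last term is at least $\tfrac12 l_t$, so after multiplying by $\alpha$ and rearranging,
\begin{align*}
\frac{\alpha}{2}\,l_t \le \alpha\log h(X_t)-\alpha\log h(X_0)-\alpha M_t-\frac{\alpha}{2}\int_0^t\Delta\log h(X_s)\,ds.
\end{align*}

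Next I would exponentiate and split off the exponential martingale $\mathcal{E}_t:=\exp(-\alpha M_t-\tfrac{\alpha^2}{2}\langle M\rangle_t)$. Writing
\begin{align*}
-\alpha M_t-\frac{\alpha}{2}\int_0^t\Delta\log h(X_s)\,ds
= \log\mathcal{E}_t+\frac{\alpha}{2}\int_0^t\bigl(\alpha|\nabla\log h|^2-\Delta\log h\bigr)(X_s)\,ds,
\end{align*}
and invoking the definition of $K_{h,\alpha}$, the deterministic integrand is bounded above by $K_{h,\alpha}$. Combined with $h\ge 1$ (so $h(X_0)^{-\alpha}\le 1$) and $h(X_t)\le\|h\|_\infty$, this yields the pathwise bound
\begin{align*}
\e^{\alpha l_t/2}\le \|h\|_\infty^{\alpha}\,\mathcal{E}_t\,\exp\!\Bigl(\frac{\alpha}{2}K_{h,\alpha}\,t\Bigr).
\end{align*}

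Finally I would take expectations. Since $\mathcal{E}_t$ is a nonnegative local martingale with $\mathcal{E}_0=1$, it is a supermartingale and $\E[\mathcal{E}_t]\le 1$, giving the claimed estimate. The only mildly delicate point is the supermartingale argument for $\mathcal{E}_t$; this is immediate from compactness of $D$ (so $|\nabla\log h|$ and hence $\langle M\rangle_t$ is bounded), which makes $\mathcal{E}_t$ a genuine martingale by Novikov's criterion. The step I would expect to need the most care writing cleanly is the correct Itô formula with the local-time term for the reflecting Brownian motion, since the factor $\tfrac12$ in front of $N\,dl_t$ must match the $\tfrac12\Delta$ generator convention used throughout the paper.
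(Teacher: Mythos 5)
Your proof is correct, and it reaches the same estimate by a closely related but differently packaged route. The paper applies It\^o's formula directly to $h^{-\alpha}(X_t)$ and shows that the exponentially compensated process
\[
h^{-\alpha}(X_t)\exp\!\Bigl(-\tfrac{\alpha}{2}K_{h,\alpha}t+\tfrac{\alpha}{2}\int_0^t N\log h(X_s)\,dl_s\Bigr)
\]
is a nonnegative local supermartingale (the paper says ``submartingale,'' but the subsequent use of Fatou to get $\E[\cdot]\le h^{-\alpha}(x)\le 1$ shows the intended and correct direction is the supermartingale one), then peels off the $l_t$ bound at the end using $N\log h\ge 1$ and $h\le\|h\|_\infty$. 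You instead apply It\^o to $\log h(X_t)$, pass to a pathwise inequality for $\tfrac{\alpha}{2}l_t$, exponentiate, and explicitly factor out the Dol\'eans--Dade exponential $\mathcal{E}_t=\exp(-\alpha M_t-\tfrac{\alpha^2}{2}\langle M\rangle_t)$, bounding the leftover deterministic integrand by $K_{h,\alpha}$ and closing with Novikov (which is indeed automatic here by compactness). The two are algebraically the same computation --- note that $h^{-\alpha}(X_t)$ times the paper's compensator equals $h^{-\alpha}(X_0)\,\mathcal{E}_t\,\exp\bigl(\tfrac{\alpha}{2}\int_0^t(\alpha|\nabla\log h|^2-\Delta\log h)(X_s)\,ds-\tfrac{\alpha}{2}K_{h,\alpha}t\bigr)$ --- but your presentation makes the exponential-martingale structure visible and avoids the Fatou step, while the paper's version hides that structure inside the supermartingale property of a single compensated process. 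Both are complete; neither has a gap.
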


\begin{proof}
By the It\^{o} formula we have
\begin{align*}
d h^{-\alpha}(X_t)&=\langle \nabla h^{-\alpha}(X_t), \ptr_t \, d B_t \rangle
+\frac12\Delta h^{-\alpha}(X_t)\,d t +\frac12Nh^{-\alpha}(X_t)\,d l_t\\
&\leq \langle \nabla h^{-\alpha}(X_t), \ptr_t \, d B_t \rangle-\alpha h^{-\alpha}(X_t)
\l(-\frac12K_{h,\alpha} \,dt+\frac12N\log h(X_t)\,dl_t\r).
\end{align*} 
Hence,
\begin{align*}
M_t:=h^{-\alpha}(X_t) \exp{\l(-\frac{\alpha}{2} K_{h,\alpha} t+\frac{\alpha}{2} \int_0^t N\log h(X_s)\,dl_s\r)}
\end{align*}
is a local submartingale.  Therefore, by Fatou's lemma and taking into account that $h\geq 1$, we get
\begin{align*}
  &\E\l[h^{-\alpha}(X_t)\exp\l(-\frac{\alpha}{2} K_{h,\alpha}t+\frac{\alpha}{2}
   \int_0^t N \log h(X_s)\, dl_s\r)\r]\\
  & \leq \E\l[h^{-\alpha}(X_{t\wedge \tau_D})
   \exp \l(-\frac{\alpha}{2} K_{h,\alpha}(t\wedge \tau_D)+\frac{\alpha}{2}
   \int_0^{t\wedge \tau_D} N\log h(X_s)\,dl_s\r)\r]\\
&\leq h^{-\alpha}(x)\leq 1.
\end{align*}
Since $N\log h(x)\geq 1$ we conclude that
\begin{equation*}
\E \l[\exp \Big(\frac{\alpha}{2} l_t\Big)\r]\leq \E\l[\exp \l(\frac{\alpha}{2} \int_0^t N\log h(X_s)\,dl_s\r)\r]\leq \|h\|_{\infty}^{\alpha}\exp\Big(\frac{\alpha}{2}K_{h,\alpha} t\Big).\qedhere 
\end{equation*}
\end{proof}

At the end of this subsection, we introduce some results on  Hessian comparison of $\rho_{\partial}$. 
Let $p$ be the orthogonal projection of $x$ on $\partial D$, and 
let $\gamma (s)=\exp_p(sN), s\in [0,\rho_{\partial} (x)]$ be the geodesic 
from $p$ to $x$. Let $\{J(s)\}_{s\in [0,\rho_{\partial}(x)]}$ be the Jacobi
field along $\gamma$ such that $J(\rho_{\partial}(x))=v$ for $v\in T_{x}D$, and
$\dot{J}(0)=-\II^{\sharp}(J(0))\in T_p\partial D$, where 
$\langle \II ^{\sharp}(J(0)), w\rangle= \II(J(0), w)$ for $w\in T_p\partial D$. 
From the variation formula of $\rho_{\partial}$, we know that 
  \begin{align}\label{variation-formula}
  \Hess \rho_{\partial} (v,v)=-\II(J(0),J(0))+\int_0^{\rho_\partial(x)}
  \l(|\dot{J}(s)|^2-\langle R(\dot{\gamma}(s), J(s))\dot{\gamma}(s), J(s) \rangle\r)\,ds.
  \end{align}
  The following result is essentially due to Kasue \cite{Kasue82,Kasue84} (see also Theorem A.1 in \cite{Wa05}).
  \begin{lemma}[Hessian Comparison]\label{Hessian Comparison}
  Let $\sigma$ and $k$ be non-negative constants such that $|\II|\leq \sigma$ and $|\Sect|\leq k$ on $\partial_{r_0}D$, where $\rho_{\partial}$ is smooth $\partial_{r_0}D$. Then
  \begin{align*}
 \frac{\ell'_{\sigma, k}}{\ell_{\sigma, k}}(\rho_{\partial}(x)) \leq \Hess {{\rho}_\partial}(v,v)\leq \frac{\ell'_{-\sigma, -k}}{\ell_{-\sigma, -k}}(\rho_{\partial}(x)), \quad  \rho_{\partial}\leq r_0\wedge \ell_{\sigma, k}^{-1}(0).
  \end{align*}
 Moreover, for $\rho_{\partial}(x)\leq r_0\wedge \ell_{\sigma, k}^{-1}(\frac{1}{2})$,
 \begin{align*}
 |\Hess {{\rho}_\partial}|\leq 2\max\{\sigma, \sqrt{k} \} .
 \end{align*} 
  \end{lemma}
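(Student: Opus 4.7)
The plan is to combine the second variation formula \eqref{variation-formula} with a Jacobi-field comparison in the spirit of Kasue. Since $|\nabla\rho_\partial|\equiv 1$ wherever $\rho_\partial$ is smooth, one has $\Hess\rho_\partial(\nabla\rho_\partial,\cdot)\equiv 0$; decomposing $v\in T_xD$ into its components parallel and orthogonal to $\dot\gamma(\rho_\partial(x))$, it therefore suffices to treat unit vectors $v\perp\dot\gamma(\rho_\partial(x))$. For such $v$, the Jacobi field $J$ appearing in \eqref{variation-formula} stays orthogonal to $\dot\gamma$ on $[0,\rho_\partial(x)]$ by the boundary data $\dot J(0)=-\II^\sharp(J(0))\in T_p\partial D$, and \eqref{variation-formula} identifies $\Hess\rho_\partial(v,v)$ with (the value at $s=\rho_\partial(x)$ of) the scalar $u(s):=\Hess\rho_\partial|_{\gamma(s)}(J/|J|,J/|J|)$, which solves a Riccati-type equation along $\gamma$.

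The heart of the argument is the Jacobi-field comparison. In the model space of constant sectional curvature $k$ and constant boundary shape operator $\sigma\,\id$, a normal Jacobi field with the prescribed mixed boundary conditions has the form $\ell(s)E(s)$ with $E$ parallel and orthogonal to $\dot\gamma$, where $\ell$ solves $\ell''+k\ell=0$ with $\ell(0)=1$, $\ell'(0)=-\sigma$; this identifies $\ell$ with $\ell_{\sigma,k}$ from \eqref{fun-h}. The hypothesis $\rho_\partial(x)\leq r_0\wedge\ell_{\sigma,k}^{-1}(0)$ keeps us strictly before the first boundary-focal point of the model, so $\ell_{\sigma,k}>0$ on $[0,\rho_\partial(x)]$ and the model ratio $\ell'_{\sigma,k}/\ell_{\sigma,k}$ is well defined. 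A standard index-form minimization with mixed boundary conditions (Dirichlet at $x$, Robin-type encoding $\II$ at $\partial D$), or equivalently a scalar Riccati comparison of $u$ against the model solutions $\ell'_{\pm\sigma,\pm k}/\ell_{\pm\sigma,\pm k}$ under $|\Sect|\leq k$ and $|\II|\leq\sigma$, then sandwiches $\Hess\rho_\partial(v,v)$ between the two model values, giving exactly the two-sided inequality. This is the boundary Hessian comparison of Kasue \cite{Kasue82,Kasue84} (see also \cite{Wa05}).

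For the uniform bound under the tighter condition $\rho_\partial(x)\leq r_0\wedge\ell_{\sigma,k}^{-1}(1/2)$, one applies the energy identity $(\ell'_{\sigma,k})^2+k\,\ell_{\sigma,k}^2\equiv k+\sigma^2$ (and its hyperbolic analogue for $\ell_{-\sigma,-k}$): together with $\ell_{\sigma,k}\geq 1/2$ this controls both model ratios $|\ell'_{\pm\sigma,\pm k}/\ell_{\pm\sigma,\pm k}|$ by $\max\{\sigma,\sqrt{k}\}$ up to the absolute constant $2$, checking the three cases $k>0$, $k=0$, $k<0$ from \eqref{fun-h} separately. The main obstacle I foresee is the index/Riccati comparison step itself when the boundary data at $\partial D$ are not Dirichlet but mixed through $\II$; however this is a classical extension of Rauch's theorem and is well documented, so the proof reduces to invoking it and carrying out the short explicit estimation on $\ell_{\pm\sigma,\pm k}$.
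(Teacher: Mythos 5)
Your route matches the paper's almost exactly: the two-sided inequality is the mixed (Robin-type) boundary Hessian comparison of Kasue, which the paper simply cites from \cite[Theorem 1.2.2]{Wbook2}; your second-variation / index-form / Riccati sketch supplies the detail the paper leaves implicit, and the reduction to $v\perp\dot\gamma$ via $\Hess\rho_\partial(\nabla\rho_\partial,\cdot)=0$ is correct. The upper branch is fine: the cosh ratio $\ell'_{-\sigma,-k}/\ell_{-\sigma,-k}$ is monotone in $t$ between its values $\sigma$ at $t=0$ and $\sqrt k$ as $t\to\infty$, hence $\le\max\{\sigma,\sqrt k\}$.

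The quantitative step for the lower branch, however, does not deliver the asserted constant. The energy identity $(\ell'_{\sigma,k})^2+k\,\ell_{\sigma,k}^2=k+\sigma^2$ combined only with $\ell_{\sigma,k}\ge 1/2$ gives at the extreme $\ell_{\sigma,k}=1/2$
\begin{align*}
\left|\frac{\ell'_{\sigma,k}}{\ell_{\sigma,k}}\right|
= 2\sqrt{k+\sigma^2-k\,\ell_{\sigma,k}^2}
= 2\sqrt{\frac{3k}{4}+\sigma^2}
\le \sqrt{7}\,\max\{\sigma,\sqrt k\},
\end{align*}
and the factor $\sqrt7\approx 2.65$ is achieved (take $k=\sigma^2$: then $\ell_{\sigma,k}=\sqrt2\cos(\sqrt k\,t+\pi/4)$ and $|\ell'/\ell|=\sqrt k\,\tan(\sqrt k\,t+\pi/4)=\sqrt7\,\sqrt k$ at $\ell=1/2$). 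So the claim ``$\max\{\sigma,\sqrt k\}$ up to the absolute constant $2$'' is not a consequence of $\ell\ge1/2$ and the energy identity. To be fair, you inherit this from the lemma and the paper's own proof, where the displayed chain $\ell'_{\sigma,k}/\ell_{\sigma,k}\ge 2\ell_{\sigma,k}\ge -2\max\{\sigma,\sqrt k\}$ contains a typo ($2\ell_{\sigma,k}$ should be $2\ell'_{\sigma,k}$) and, even after correction, requires $|\ell'_{\sigma,k}|\le\max\{\sigma,\sqrt k\}$, which fails at $\ell_{\sigma,k}=1/2$. Either the constant should be $\sqrt7$, or the threshold $\ell_{\sigma,k}^{-1}(1/2)$ should be raised to roughly $\ell_{\sigma,k}^{-1}(\sqrt{2/5})$ to recover the factor $2$.
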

\begin{proof}
The proof of  first inequality can be found in \cite[Theorem 1.2.2]{Wbook2}. Based on this, 
it is easy to have for $k,\sigma\geq 0$, 
\begin{align*}
\Hess \rho_{\partial}(v,v)\leq  \max \{\sigma, \sqrt{k}\}.
\end{align*}
For $\rho_{\partial}(x)\leq r_0\wedge \ell_{k,\sigma}^{-1}(\frac{1}{2})$,
\begin{align*}
\Hess \rho_{\partial}(v,v)\geq \frac{\ell'_{k,\sigma}(\rho_{\partial} (x))}{\ell_{k,\sigma}(\rho_{\partial} (x))}\geq 2 \ell_{k,\sigma}(\rho_{\partial} (x))\geq -2\max\{\sigma, \sqrt{k}\}.
\end{align*}
We then complete proof of the second inequality.
\end{proof}

\subsection{Hessian estimate of Dirichlet eigenfunctions}

Lemmas \ref{lem0}, \ref{lem1} and \ref{esti:local-time} allow to derive an estimate of $|\Hess \phi|$ on the boundary
$\partial D$.

\begin{lemma}\label{lem3}
 Let $K_0, \sigma $ be non-negative constants such that  $\Ric\geq -K_0$, $|\II|\leq \sigma$. Suppose that
  the distance function  $\rho_{\partial}$ is  smooth on $\partial_{r_0}D:=\{x: \rho_{\partial }(x)\leq r_0\}$ for some constant $r_0>0$. 
  Then for $x\in \partial D$,
  \begin{align*}
   \big\|\Hess(\phi)\big\|_{\partial D,\infty} &\leq 
  (n-1)\sigma \,\|N(\phi)\|_{\partial D, \infty}\\
                                               &\quad+\|h\|_{\infty}^{\sigma}\e^{\frac{1}{2}(K_0+\sigma K_{h,\sigma})t}\l(C_1\frac{1}{\sqrt{t}}+C_2\sqrt{t}\r)\|\phi\|_{\infty}\\
 &\quad+  \|h\|_{\infty}^{\sigma}\e^{\frac{1}{2}(K_0+\sigma K_{h,\sigma})t}\l(\frac{1}{\sqrt{t}}+C_3\sqrt{t}\r)\|\nabla \phi\|_{\infty}\\
 &\quad +\|h\|_{\infty}^{\sigma}\e^{\frac{1}{2}(K_0+\sigma K_{h,\sigma})t}\sqrt{t}C_4\|\Hess \phi\|_{\infty}
  \end{align*}
  where $h\in C^{\infty}(D)$ such that $h\geq 1$ and $N\log h \geq 1$ and
$$K_{h,\sigma}=\sup\{-\Delta\log h+\sigma |\nabla \log h|^2\},$$
and the  constant $C_1,C_2, C_3, C_4$ are defined as
\begin{align*}
&C_1= \|\Delta \rho_{\partial} \|_{\partial_{r_0}D}, \\
& C_2 = \| \Delta (\psi (\rho_{\partial}))  \Delta \rho_{\partial}+2 \psi'(\rho_{\partial})|\nabla(\Delta \rho_{\partial})|+\psi(\rho_{\partial})(\lambda \Delta \rho_{\partial}+\Delta^2 \rho_{\partial})\|_{\partial_{r_0}D},\\
&C_3=\|\Delta (\psi (\rho_{\partial}))+2 \psi'(\rho_{\partial}) \Delta \rho_{\partial} +\psi(\rho_{\partial}) (3|\nabla(\Delta \rho_{\partial})|+\lambda)\,\|_{\partial_{r_0}D},\\
&C_4= \|2\psi'(\rho_{\partial})+2(n-1)\psi(\rho_{\partial})|\Hess \rho_{\partial}|\,\|_{\partial_{r_0}D}.
 \end{align*}
 where $\psi\in C^2(\R^+, [0,1])$ satisfies $\psi(0)=1$, $\psi'(0)=0$ and $\psi(r)=0$ for $r>r_0$.
\end{lemma}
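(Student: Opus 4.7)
My plan is to split $\Hess\phi$ at a boundary point $x\in\partial D$ with respect to the orthogonal decomposition $T_xD = T_x\partial D\oplus\R N$. For a unit vector $v = v_T+\alpha N$ with $|v_T|^2+\alpha^2=1$, differentiating $\phi\equiv 0$ twice tangentially together with the definition of the second fundamental form yields $\Hess\phi(v_T,v_T) = -\II(v_T,v_T)\,N\phi$, while Lemma \ref{lem0} gives $\Hess\phi(N,N) = H\,N\phi$. Using $|\II|\le\sigma$ and $|H|\le(n-1)\sigma$, the diagonal part of
\[
\Hess\phi(v,v) = -\II(v_T,v_T)\,N\phi + 2\alpha\,\Hess\phi(v_T,N) + \alpha^2 H\,N\phi
\]
is bounded by $(n-1)\sigma|N\phi|$, producing the first summand in the stated inequality. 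It remains to control the mixed entry $\Hess\phi(v_T,N) = v_T(N\phi)$ for unit tangential $v_T$.

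To bound this tangential derivative of $N\phi$, I would introduce the auxiliary function $g(y) := \psi(\rho_\partial(y))\,\langle\nabla\phi(y),\nabla\rho_\partial(y)\rangle$ on $\partial_{r_0}D$ (extended by zero outside). Since $\psi(0)=1$, $\psi'(0)=0$, $\nabla\rho_\partial|_{\partial D}=N$ and $\Hess\rho_\partial(\nabla\rho_\partial,\cdot)=0$, a direct calculation gives $g|_{\partial D}=N\phi$ and $v_T(g)(x) = \Hess\phi(v_T,N)(x)$, so the task reduces to bounding $v_T(g)$ at the boundary. I would represent $g(x)$ by applying It\^o's formula to $g(X_t)$, where $X_\cdot$ is the reflecting Brownian motion started at $x$ with local time $l_\cdot$ on $\partial D$, obtaining
\[
g(x) = \E[g(X_t)] - \tfrac12\E\!\int_0^t\!\Delta g(X_s)\,ds - \tfrac12\E\!\int_0^t\!(\Delta\rho_\partial\cdot N\phi)(X_s)\,dl_s,
\]
and then differentiate along $v_T$ in the initial point via the Bismut--Elworthy--Li formula adapted to the reflecting flow. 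Expanding $\Delta g$ through the Bochner identity $\Delta\nabla\phi = -\lambda\nabla\phi + \Ric^\sharp(\nabla\phi)$, one finds that the coefficient of $\Hess\phi$ equals $2\psi'(\rho_\partial)\Hess\phi(\nabla\rho_\partial,\nabla\rho_\partial) + 2\psi(\rho_\partial)\langle\Hess\phi,\Hess\rho_\partial\rangle$, bounded by $C_4\|\Hess\phi\|_\infty$ thanks to $\Hess\rho_\partial(\nabla\rho_\partial,\cdot)=0$, which forces the trace-coupling to be $(n{-}1)$-dimensional; analogous computations identify the coefficients of $\phi$ and $\nabla\phi$ with $C_2$ and $C_3$ respectively. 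The BEL-variance yields the leading $1/\sqrt t$ factors: the coefficient $1/\sqrt t$ on $\|\nabla\phi\|_\infty$ comes from $\|g\|_\infty\le\|\nabla\phi\|_\infty$ in the $\E[g(X_t)]$ term, and the coefficient $C_1/\sqrt t$ on $\|\phi\|_\infty$ from the boundary-local-time piece combined with the bound $\|N\phi\|_{\partial D,\infty}\lesssim\|\phi\|_\infty/\sqrt t$ from Lemma \ref{lem1}; the drift integrals $\int_0^t\Delta g\,ds$ contribute the $\sqrt t$ factors via $\int_0^t s^{-1/2}ds\sim\sqrt t$.

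The exponential weight $\|h\|_\infty^\sigma\exp\bigl((K_0+\sigma K_{h,\sigma})t/2\bigr)$ accumulates from two independent sources: the Ricci lower bound $-K_0$ controls the damped parallel transport $|Q_t|\le\e^{K_0t/2}$ (Lemma \ref{th3}) appearing in the BEL integrand, while the local-time exponential moment $\E[\e^{\sigma l_t/2}]\le\|h\|_\infty^\sigma\e^{\sigma K_{h,\sigma}t/2}$ (Lemma \ref{esti:local-time}) handles the $|\II|\le\sigma$ contributions picked up at each boundary hit of the reflecting derivative flow. The main obstacle is the rigorous BEL differentiation in the initial point under reflection at $\partial D$---in particular tracking the local-time correction for the derivative process---together with the bookkeeping needed to match the resulting expansion coefficients exactly against $C_1,\dots,C_4$. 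The cancellations forced by $\psi(0)=1$, $\psi'(0)=0$ and $\phi|_{\partial D}=0$ are crucial: they remove otherwise divergent singular contributions and leave the three remainder pieces with coefficients in $\|\phi\|_\infty$, $\|\nabla\phi\|_\infty$ and $\|\Hess\phi\|_\infty$ precisely as claimed.
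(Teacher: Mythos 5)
Your decomposition of $\Hess\phi$ at a boundary point into tangential, normal, and mixed pieces matches the paper's, and the reduction to bounding $\Hess\phi(v_T,N)=v_T(N\phi)$ is correct. The genuine gap is in your choice of auxiliary function. You set $g=\psi(\rho_\partial)\,N\phi$, whereas the paper takes
$\varphi=\psi(\rho_\partial)\,\div(\phi N)=\psi(\rho_\partial)\bigl(N\phi+\phi\,\Delta\rho_\partial\bigr)$.
These agree on $\partial D$ (since $\phi|_{\partial D}=0$) and have the same tangential derivative there, but they differ off the boundary, and that difference is decisive. Because $\psi'(0)=0$ and $N^2\phi=-H\,N\phi$ (Lemma~\ref{lem0}), the paper's $\varphi$ satisfies $N\varphi|_{\partial D}=0$: the normal derivative of the extra $\phi\,\Delta\rho_\partial$ piece exactly cancels $N(N\phi)=-H\,N\phi$. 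Your $g$ does not: $Ng|_{\partial D}=\Hess\phi(N,N)=-H\,N\phi\neq 0$ in general. You correctly record the consequence, namely the boundary local-time integral $\tfrac12\,\E\!\int_0^t(\Delta\rho_\partial\,N\phi)(X_s)\,dl_s$ in the It\^o representation of $g(x)$, but the next step --- ``differentiate along $v_T$ in the initial point via the BEL formula adapted to the reflecting flow'' --- is precisely where this breaks down. The standard Bismut gradient bound $|\nabla P_t^Nf|\leq t^{-1/2}\e^{K_0t/2}\,\E[\e^{\sigma l_t}]^{1/2}\|f\|_\infty$ applies to $x\mapsto\E^x[f(X_t)]$ and to $x\mapsto\E^x[\int_0^t F(X_s)\,ds]$, but it gives no handle on $x\mapsto\E^x[\int_0^t G(X_s)\,dl_s]$. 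Differentiating a local-time functional in the starting point requires a separate Bismut-type formula for boundary-supported integrals, which is a substantially harder piece of machinery that neither your sketch nor the paper develops.

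The paper's construction sidesteps this entirely: with $N\varphi|_{\partial D}=0$ the Kolmogorov backward equation reads $\varphi(x)=P_t^N\varphi(x)-\tfrac12\int_0^tP_s^N(\Delta\varphi)(x)\,ds$ with no boundary term, and one may tangentially differentiate both sides and apply the gradient bound to each. The constants $C_1,\dots,C_4$ then fall out of expanding $\Delta\varphi$ via the commutation identity $\Delta\div=\div(\Box-\Ric^\sharp)$ and the Weitzenb\"ock relations $\div N=\Delta\rho_\partial$, $\nabla N=\Hess\rho_\partial$. In particular $C_1=\|\Delta\rho_\partial\|$ comes from $\|\varphi\|_\infty\leq\|\nabla\phi\|_\infty+\|\div N\|\,\|\phi\|_\infty$ in the $P_t^N\varphi$ term, not from the local-time piece as you proposed --- a secondary indication that your bookkeeping would not land on the stated constants even if the local-time derivative could be controlled. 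To repair your argument, you essentially have to add the $\psi(\rho_\partial)\phi\,\Delta\rho_\partial$ correction to $g$, which recovers the paper's $\varphi$.
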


\begin{proof}
  Given $x\in \partial D$, let $\{X_i\}_{1\leq i\leq n}$ be an
  orthonormal basis of $T_xD$ with $X_1=N$. Then
  \begin{align*}
    |\Hess(\phi)(X_i,X_j)|
    &=|\nabla \bd \phi(X_i,X_j)|=|\langle\nabla_{X_i}\nabla \phi,X_j\rangle|\\
    &=|X_i\langle \nabla \phi, X_j \rangle-\langle \nabla \phi, \nabla_{X_i} X_j\rangle|.
  \end{align*}
 By assumption we have $|\II|\leq \sigma$.
  If $X_i,\,X_j \in T_x \partial D$, i.e.~$i,j \neq 1$, then $\langle \nabla \phi, X_j\rangle|_{\partial D}=0$ and
  \begin{align}\label{add-Hess-boundary1}
    |\Hess(\phi)(X_i,X_j)|=|-N(\phi)\langle N, \nabla_{X_i} X_j\rangle|\leq \sigma|N(\phi)|.
  \end{align}
  If $X_i=X_j=N$, i.e. $i=j=1$, then $\nabla _N N|_{\partial D}=0$ and
  \begin{align}\label{add-Hess-boundary2}
    |\Hess(\phi)(N,N)|= |N^2(\phi)|\leq |H N(\phi)|\leq (n-1)\sigma |N(\phi)|. 
  \end{align}
  If $X_j\in T_x\partial D$ and $X_i=N$ (i.e. $j\neq 1$ and $i=1$),
  then
  \begin{equation}\label{HessXN}
    |\Hess(\phi)(X_j,N)|(x)=|NX_j(\phi)|(x).
  \end{equation}
  In order to get control on \eqref{HessXN}, we shall use a probabilistic argument based
  on the Brownian motion
  on $D$ reflected at the boundary. Before going into the details, we make a general remark
  on the extension of vector fields from $\partial D$ to a tubular neighborhood of the boundary.

\begin{remark}
Assuming that the boundary $\partial D$ is smooth, let $N$ be the unit inward normal vector field $N$ on $\partial D$.
Furthermore, let
\begin{align}\label{TubularNH}
  \Phi\colon {[0,r_0[}\times \partial D\to D,\quad (r,x)\mapsto \exp_x(rN),
\end{align} 
be the geodesic from $x\in\partial D$ orthogonal to $\partial D$ and parametrized by its arc length $r$.
As the differential of $\Phi$ at any point $(0,x)$ has full rank, we find $\varepsilon_0>0$ such that
$\Phi$ is a diffeomorphism from ${[0,\varepsilon_0[}\times \partial D$ 
onto the open neighborhood $\{x\in D\colon \rho_{\partial}(x)<\varepsilon_0\}$ of $\partial D$ in $D$.
This allows to extend $N$ to a tubular (collar) neighborhood of $\partial D$ as
$\Phi_*\frac{\partial}{\partial r}$.
By construction then $\nabla_NN=0$.
If $X$ is a vector field on $\partial D$ tangential to $\partial D$, we extend it to the neighborhood of $\partial D$ as being independent of the real variable in the product 
${[0,\varepsilon_0[}\times \partial D$. By construction, close to the boundary, the distance function
$\rho_{\partial}(x)=\text{dist}(x,\partial D)$ is smooth and satisfies $N=\nabla\rho_{\partial }$.
\end{remark}

 Let $N$ be the extension of the normal vector field to a tubular neighborhood $\partial_{r_0}D:=\{x: \rho_{\partial }(x)\leq r_0\}$ of $\partial D$ and define
 \begin{align}\label{def-varphi}
 \varphi(x)=\psi(\rho_{\partial}(x))\div (\phi N),\qquad x\in \partial_{r_0} D,
 \end{align}
 where $\psi\in C^2(\R^+, [0,1])$ satisfies $\psi(0)=1$, $\psi'(0)=0$ and $\psi(r)=0$ for $r>r_0$.
 Using the formula
$\div(\phi N)=N(\phi)+\phi\,\div(N)$, along with Lemma \ref{lem0}, we observe for $x\in \partial D$,
$$N(\varphi)(x)=\psi'(0)\div (\phi N)+ N(\div (\phi N))=0.$$
%=(\lambda +N(H))\phi(x)=0.$$
Thus $\varphi$ satisfies the Neumann boundary conditions on $D$.

  Let now $X_t$ be the reflecting Brownian motion on $D$ and  $P_{t}^Nf(x)=\E^x[f(X_t)]$ for $f\in \mathcal{B}_b(D)$ the corresponding Neumann semigroup. 
 According to the Kolmogorov equation,
 \begin{align*}
\varphi(x)&=P_{t}^{N}( \varphi)(x)-\frac{1}{2}\int_0^tP_s^{N} (\Delta \varphi)(x)\,ds.
 \end{align*}
 Taking derivative on both sides of the above equation yields
 \begin{align*}
X_i( \varphi)(x)= X_i (P_{t}^{N} \varphi)(x)-\frac{1}{2}\int_0^t
X_i ( P^N_{s}\Delta  \varphi)(x)\, ds
 \end{align*}
where $X_i$ is tangential to $\partial D$. We first observe that 
 for $x\in \partial D$,
 \begin{align*}
  X_i( \varphi)(x)&=X_i(\psi (\rho_{\partial}))(x)\div (\phi N)(x)+\psi(\rho_{\partial}(x))X_i(\div (\phi N))(x)=X_i(\div (\phi N))(x)\\
  &=X_iN(\phi)(x)+X_i(\phi)(x)\div (N)(x)+\phi(x)X_i(\div (N))(x)\\
  &=X_iN(\phi)(x).
  \end{align*}
  To deal with the upper bound, we use the Bismut formula established in \cite[Theorem 3.2.1]{Wbook2} for the
  compact manifold $D$, which gives 
 \begin{align*}
 |\nabla P_t^{N}f|\leq \frac{1}{\sqrt{t}}\e^{\frac{1}{2}K_0t} \E^x[\e^{\sigma l_t}]^{\frac{1}{2}}\|f\|_{\infty},
 \end{align*}
 where $l_t$ is the local time supported on $\partial D$. By %\cite[Corollary 3.2.12.]{Wbook2} or
 Lemma \ref{esti:local-time} derived in the previous subsection, we have
 \begin{align*}
 \E^x[\e^{\sigma l_t}]\leq \|h\|_{\infty}^{2\sigma} \exp\l(\sigma K_{h,2\sigma} t\r),
 \end{align*}
 where $h\in C^{\infty}(D)$ such that $h\geq 1$ and $N\log h \geq 1$ and
$$K_{h,2\sigma}=\sup\{-\Delta\log h+2\sigma |\nabla \log h|^2\}.$$
We then conclude that
 \begin{align}\label{boundaryHessian-1} 
 |X_iN(\phi)|(x)&\leq \|h\|_{\infty}^{\sigma}\e^{\frac{1}{2}(K_0+\sigma K_{h,2\sigma})t}\l[\frac{1}{\sqrt{t}}\,\|\varphi\|_{B(x,r_0)}+ \sqrt{t} \,\|\Delta \varphi\|_{B(x, r_0)}   \r].
 \end{align} 
According to the definition of $\varphi$ in \eqref{def-varphi}, we have
 \begin{align*}
\|\varphi\|_{\infty} \leq \| \nabla \phi\|_{\infty}+\|\div(N)\|_{\partial_{r_0}D} \|\phi\|_{\infty}
 \end{align*}
 By commutation rules, we calculate 
 \begin{align}\label{delta-est1}
 \Delta ((\psi(\rho_{\partial})) \div (\phi N))=& \Delta (\psi (\rho_{\partial})) \div (\phi N))
 +2\psi'(\rho_{\partial}) N(\div(\phi N))+\psi (\rho_{\partial})) \Delta (\div (\phi N))\notag\\
 =& \Delta (\psi (\rho_{\partial})) (\phi \div(N)+N(\phi))
 +2\psi'(\rho_{\partial}) \Big(\phi N(\div(N))+N(\phi) \div(N)+N^2(\phi)\Big)\notag\\
 &+\psi (\rho_{\partial})) \Delta (\div (\phi N))
 \end{align}
 and 
 \begin{align}\label{delta-est2}
 \Delta (\div (\phi N))&= \div ((\Box-\Ric^{\sharp})(\phi N) )\notag\\
 &= \div(\Delta (\phi) N)+ \div(\phi\Box N)+2\div(\nabla _{\nabla \phi}N)-\phi \div (\Ric^{\sharp}(N))-\Ric(N,\nabla\phi)\notag\\
 &= -\lambda \div(\phi N)+\phi \div((\Box-\Ric^{\sharp}) N)+\langle \Box N, \nabla \phi \rangle+2\div(\nabla _{\nabla \phi}N)-\Ric(N,\nabla\phi),
 \end{align}
 where $\Box =\tr \nabla^2$ and $\Ric^{\sharp}\colon TD\rightarrow TD$ such that $\langle \Ric^{\sharp}(v), w \rangle=\Ric(v,w)$ for $v,w\in T_xD$, $x\in D$. 
 Let $\{e_i\}_{1\leq i\leq n}$ be orthonormal basis of $TD$ around $x$ satisfying $\nabla e_i(x)=0$. 
 We then have
 \begin{align*}
 \nabla _{\nabla \phi}N=\sum_{i=1}^n (e_i(\phi)) \nabla _{e_i}N,
 \end{align*}
 and as a consequence
 \begin{align*}
 \div (\nabla _{\nabla \phi} N)&=\sum_{i=1}^n\l[ \langle \nabla e_i(\phi), \nabla_{e_i}N\rangle +e_i(\phi) \div(\nabla _{e_i}N)\r]\\
 &=\langle \Hess_{\phi}, \nabla N \rangle+\langle \nabla \phi, \sum_{i=1}^n \div(\nabla_{e_i}N)e_i \rangle\\
 &=\langle \Hess_{\phi}, \nabla N \rangle+\langle \nabla \phi, \nabla (\div (N)) \rangle.
 \end{align*}
 Combining this with \eqref{delta-est2} yields
 \begin{align*}
 \Delta (\div (\phi N))
 &=\phi(-\lambda \div(N)+\Delta(\div(N)))-\lambda N(\phi)+2\langle \Hess (\phi), \nabla N \rangle+2\langle\nabla \phi, \nabla (\div(N)) \rangle \notag\\
 &\quad+\langle \Box N, \nabla \phi \rangle-\Ric(N,\nabla\phi).
 \end{align*}
 From the fact that $N=\nabla \rho_{\partial}$ and the Weitzenb\"{o}ck formula, we observe that
 \begin{align}\label{delta-est3}
 \div(N)=\Delta \rho_{\partial},\ \  \nabla N=\Hess \rho_{\partial},\ \  \mbox{and} \quad   \langle \Box N, \nabla \phi \rangle)-\Ric(N,\nabla\phi)=\langle \nabla \Delta \rho_{\partial}, \nabla \phi \rangle.
 \end{align}
 Combining the equations \eqref{delta-est1}, \eqref{delta-est2} and \eqref{delta-est3} with \eqref{boundaryHessian-1},  we finally conclude that
 \begin{align*}
& |X_iN(\phi)|(x)\\
&\leq 
  \|h\|_{\infty}^{\sigma}\e^{\frac{1}{2}(K_0+2\sigma K_{h,\sigma})t}\l(C_1\frac{1}{\sqrt{t}}+C_2\sqrt{t}\r)\|\phi\|_{\infty}+  \|h\|_{\infty}^{\sigma}\e^{\frac{1}{2}(K_0+\sigma K_{h,2\sigma})t}\l(\frac{1}{\sqrt{t}}+C_3\sqrt{t}\r)\|\nabla \phi\|_{\infty}\\
 &\quad  +\|h\|_{\infty}^{\sigma}\e^{\frac{1}{2}(K_0+\sigma K_{h,2\sigma})t}\sqrt{t}\,C_4\|\Hess \phi\|_{\infty}
 \end{align*}
 where 
 \begin{align*}
&C_1= \|\Delta \rho_{\partial} \|_{\partial_{r_0}D}, \\
& C_2 = \| \Delta (\psi (\rho_{\partial}))  \Delta \rho_{\partial}+2 \psi'(\rho_{\partial})|\nabla(\Delta \rho_{\partial})|+\psi(\rho_{\partial})(\lambda \Delta \rho_{\partial}+\Delta^2 \rho_{\partial})\|_{\partial_{r_0}D},\\
&C_3=\|\Delta (\psi (\rho_{\partial}))+2 \psi'(\rho_{\partial}) \Delta \rho_{\partial} +\psi(\rho_{\partial}) (3|\nabla(\Delta \rho_{\partial})|+\lambda)\,\|_{\partial_{r_0}D},\\
&C_4= \|2\psi'(\rho_{\partial})+2(n-1)\psi(\rho_{\partial})|\Hess \rho_{\partial}|\,\|_{\partial_{r_0}D}.
 \end{align*}
 The proof is completed by combining the above estimate with \eqref{add-Hess-boundary1} and
 \eqref{add-Hess-boundary2}.
 \end{proof}

Combining the estimates in Lemmas \ref{lem1} and \ref{lem3}  with
Theorem \ref{th0}, we are now in a position to prove our  main
result.

\begin{theorem}\label{main-th0}
Let $D$ be a compact Riemannian manifold with
boundary $\partial D$.
Let   $K_0,K_1$, $K_2$ and $\sigma$ be non-negative constants such that  $\Ric\geq -K_0$, $|R|\leq K_1$ and $|\bd^* R+\nabla \Ric|\leq K_2$ on $D$, and that   $|\II|\leq \sigma$ on the boundary $\partial D$. Assume  the distance function  $\rho_{\partial}$ is smooth on the tubular neighborhood $\partial_{r_0}D:=\{x: \rho_{\partial }(x)\leq r_0\}$ of
$\partial D$ for some constant $r_0>0$, and let $\alpha, \beta, \gamma \in \mathbb{R}$ be such that
  \begin{align}\label{condition-rho1}
  |\Hess \rho_{\partial }|\leq \frac{\alpha}{n-1},\quad |\nabla (\Delta \rho_{\partial })|\leq \beta,\quad |\Delta^2 \rho_{\partial}|\leq \gamma \quad 
    \mbox{on}\ \partial_{r_0}D.
  \end{align}
  For $h\in C^{\infty}(D)$ with
$\min_Dh =1$ and $N\log h|_{\partial D}\geq 1$, 
then 
\begin{align}\label{add-Hess-D2}
\frac{\|\Hess \phi\|}{\|\phi\|_{\infty}}
&\leq  2(n-1)\e \sigma  \l(\alpha+\sqrt{\frac{2\lambda}{\pi}}\r)+2\alpha\|h\|_{\infty}^{\sigma}\sqrt{\e} \max \l\{\sqrt{\lambda+2K_0+\sigma K_{h,2\sigma}},\, 2\sqrt{\e}\|h\|_{\infty}^{\sigma}\left( \frac{6}{r_0}+2\alpha\right)\r\} \notag\\
&\quad + \frac{\frac{3}{r_0}(\alpha^2+2\beta)+ \frac{6}{r_0^2}\alpha +2\lambda \alpha+\gamma}{ \frac{6}{r_0}+2\alpha}\notag\\
&\quad+\l(\frac{\frac{9}{r_0} \alpha + \frac{6}{r_0^2}  + 3\beta +\lambda +K_1}{ \frac{6}{r_0}+2\alpha}  +\frac{K_2}{4\sqrt{\e}\left( \frac{6}{r_0}+2\alpha\right)^2}\r) \sqrt{\e}\l(\alpha +\l(\frac{1}{4}\sqrt{\frac{\pi}{2}}+\sqrt{\frac{2}{\pi}}\r)\sqrt{\lambda+K_0}\r) \notag\\
  &\quad + 4{\e}\,\|h\|_{\infty}^{\sigma}  \max \l\{\sqrt{\lambda+2K_0+\sigma K_{h,2\sigma}}\,,\, 2\sqrt{\e}\,\|h\|_{\infty}^{\sigma}\left( \frac{6}{r_0}+2\alpha\right)\r\}\notag\\
  &\qquad\qquad\qquad\qquad\qquad\times\l(\alpha +\l(\frac{1}{4}\sqrt{\frac{\pi}{2}}+\sqrt{\frac{2}{\pi}}\r)\sqrt{\lambda+K_0}\r).
\end{align}

\end{theorem}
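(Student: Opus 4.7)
The idea is to start from the stochastic Hessian formula of Theorem~\ref{th0}, estimate each of its three expectations, feed in the boundary estimates of Lemmas~\ref{lem1} and \ref{lem3}, and absorb the $\|\Hess\phi\|_\infty$ feedback coming from Lemma~\ref{lem3} into the left-hand side. Concretely I would apply \eqref{Hessian-formula-local_0} with the tent-shaped weight $k(s)=(T-s)_+/T$ (so that $k(0)=1$, $k(T)=0$ and $\dot k\equiv-1/T$ on $(0,T)$) and time horizon $t=T$. With this choice the first term in \eqref{Hessian-formula-local_0} survives only on $\{\tau_D\leq T\}$ and is bounded in absolute value by $\e^{(K_0+\lambda/2)T}\,\|\Hess\phi\|_{\partial D,\infty}$ using $|Q_{\tau_D}|\leq\e^{K_0\tau_D/2}$ and $k(\tau_D)\leq 1$. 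Lemma~\ref{th3} controls the second term by an expression of order $(K_1\sqrt T+K_2T/2)\,\e^{(K_0+\lambda/2)T}\,\|\nabla\phi\|_\infty$ (the bound stated there being divided by $|\dot k(T)|$ thanks to the linearity of $W^k$ in its second entry). For the third term, Cauchy--Schwarz combined with $\E\bigl[\int_0^{T\wedge\tau_D}|Q_s v|^2\,ds\bigr]\leq T\e^{K_0T}$ yields a bound of order $T^{-1/2}\,\e^{(K_0+\lambda)T/2}\,\|\nabla\phi\|_\infty$.

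\emph{Boundary Hessian estimate and absorption.} I would then apply Lemma~\ref{lem3} with a cutoff $\psi\in C^2(\R_+,[0,1])$ satisfying $\psi(0)=1$, $\psi'(0)=0$, $\psi\equiv 0$ on $[r_0,\infty)$ and $\|\psi'\|_\infty=O(1/r_0)$, $\|\psi''\|_\infty=O(1/r_0^2)$, so that the constants $C_1,\dots,C_4$ become explicit in $r_0,\alpha,\beta,\gamma,\lambda$; in particular $C_4=O(1/r_0+\alpha)$. I would replace $\|N(\phi)\|_{\partial D,\infty}$ by its bound from Lemma~\ref{lem1} (used at time $1/\lambda$) and $\|\nabla\phi\|_\infty$ by the explicit Dirichlet gradient bound of \cite{ATW}, which is of order $\sqrt\lambda\,\|\phi\|_\infty$. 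Substituting back into the first step gives an inequality of the form
\begin{align*}
\|\Hess\phi\|_\infty\leq A(T,t)\,\|\phi\|_\infty+D(T,t)\,\|\Hess\phi\|_\infty,
\end{align*}
where $D(T,t)$ arises from the product $\e^{(K_0+\lambda/2)T}\cdot\|h\|_\infty^\sigma\e^{(K_0+\sigma K_{h,2\sigma})t/2}\cdot C_4\sqrt t$. Choosing $t$ of order $(6/r_0+2\alpha)^{-2}$ forces $D(T,t)\leq 1/2$, which allows to absorb the $\|\Hess\phi\|_\infty$ term on the right-hand side.

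\emph{Scaling in $\lambda$ and main difficulty.} Finally, choosing $T$ of order $1/(\lambda+K_0)$ keeps $\e^{(K_0+\lambda/2)T}$ of order $\sqrt\e$, while $\sqrt T$, $1/\sqrt T$ and $T$ produce the $1/\sqrt{\lambda+K_0}$, $\sqrt{\lambda+K_0}$ and $1/(\lambda+K_0)$ contributions visible in \eqref{add-Hess-D2}. Regrouping the explicit constants then reproduces the stated inequality. The genuine technical obstacle is the simultaneous two-parameter tuning: $t$ must be small enough for the absorption step but not so small that the $1/\sqrt t$ factors in Lemma~\ref{lem3} dominate, while $T\asymp 1/(\lambda+K_0)$ is dictated by the target $\lambda$-dependence and by the need to keep $\e^{K_0T}$ of order one. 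The $\max$-structure in the first line of \eqref{add-Hess-D2} reflects the two regimes of $t$ under the constraint $\sqrt t\,(6/r_0+2\alpha)\leq\text{const}$, and careful bookkeeping of these constants is what yields the precise form of the bound.
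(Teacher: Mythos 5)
Your outline identifies essentially all the right ingredients --- Theorem~\ref{th0} with the tent weight $k(s)=(t-s)/t$, Lemma~\ref{th3} for the $W^k$ term, Lemma~\ref{lem3} with a cubic cutoff $\psi$ for the boundary Hessian, Lemma~\ref{lem1} for $\|N\phi\|_{\partial D,\infty}$, the ATW gradient bound, and absorption of the $\|\Hess\phi\|_\infty$ feedback --- and that part matches the paper. However, your time-parameter scheme is genuinely different from what the paper does, and as you have set it up it has a gap.

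The paper works with a \emph{single} time parameter. Inside the expectation over $\tau_D$ it applies Lemma~\ref{lem3} with the \emph{random} residual time $t-\tau_D$ and then uses the algebraic observation
$\frac{t-\tau_D}{t}\cdot\frac{1}{\sqrt{t-\tau_D}}\leq\frac{1}{\sqrt t}$
(and similarly $\frac{t-\tau_D}{t}\sqrt{t-\tau_D}\leq\sqrt t$) to collapse all $\tau_D$-dependent factors onto the fixed $t$, yielding \eqref{ineqn-Hess2}. It then sets the single
$t_0=\bigl(\max\{\lambda+2K_0+\sigma K_{h,2\sigma},\,4\e\|h\|_\infty^{2\sigma}C_4^2\}\bigr)^{-1}$;
the first entry in the max controls the exponential, the second the absorption, and $1/\sqrt{t_0}$ is exactly the $\max$ appearing in \eqref{add-Hess-D2}. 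You instead decouple the horizon $T$ of the Hessian formula from the time $t$ in Lemma~\ref{lem3}, which is a legitimate alternative, but the specific choices you then make do not close: with $T\asymp(\lambda+K_0)^{-1}$ fixed, the absorption coefficient is $D(T,t)\asymp\sqrt\e\,\|h\|_\infty^\sigma\e^{(K_0+\sigma K_{h,2\sigma})t/2}\,C_4\sqrt t$, and your proposal $t\asymp(6/r_0+2\alpha)^{-2}=1/C_4^2$ gives $C_4\sqrt t\asymp1$ but leaves a factor $\sqrt\e\,\|h\|_\infty^\sigma$ that is not $\leq1/2$ in general. The correct scaling must also carry $\|h\|_\infty^{-2\sigma}$ and $\e^{-1}$, exactly as encoded in the second argument of the paper's $\max$. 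Relatedly, your reading of the $\max$ in \eqref{add-Hess-D2} as reflecting ``two regimes of $t$'' under a constraint $\sqrt t\,(6/r_0+2\alpha)\leq\text{const}$ is not quite what happens: the $\max$ is produced because the \emph{single} $t_0$ must simultaneously satisfy the exponential bound $\e^{(\lambda/2+K_0+\sigma K_{h,2\sigma}/2)t_0}\leq\sqrt\e$ and the absorption bound $\sqrt{t_0}\leq(2\sqrt\e\,\|h\|_\infty^\sigma C_4)^{-1}$, so $1/\sqrt{t_0}$ is the maximum of the two corresponding square roots. With your decoupled $T$ and $t$ the $\sqrt{\lambda+2K_0+\sigma K_{h,2\sigma}}$ option in that $\max$ would never arise, so ``regrouping constants'' would not reproduce \eqref{add-Hess-D2} as stated. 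If you want to stay with two parameters you should at least fix the absorption step by taking $t\asymp\bigl(4\e\|h\|_\infty^{2\sigma}C_4^2\bigr)^{-1}$, and accept that you will prove a bound of the same shape but not the identical constant; to reproduce \eqref{add-Hess-D2} precisely you should instead follow the single-parameter route with the $t-\tau_D$ trick.
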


\begin{proof}
According to formula \eqref{Hessian-formula-local_0} we have
\begin{align*}
|\Hess\phi (v,v)|&= \E\Big[\e^{\lambda (t\wedge \tau_D)/2} \Hess \phi \big(Q_{t\wedge \tau_D}(k(t\wedge \tau_D)v), Q_{t\wedge \tau_D}(v)\big)\Big]\\
&\quad +\E\Big[\e^{\lambda (t\wedge \tau_D)/2} \bd \phi
      (W_{t\wedge \tau_D}^k(v, v))\Big]\\
  &\quad     -\E\l[\e^{\lambda (t\wedge \tau_D)/2} \bd \phi(Q_{t\wedge \tau_D}( v))\int_0^{t\wedge \tau_D}\langle Q_s(\dot{k}(s)v),
     \ptr_s d  B_s \rangle\r].
\end{align*}
Taking $k(s)=(t-s)/{t}$ for $s\in [0,t]$ in the equation yields
\begin{align*}
|\Hess \phi (v,v)| 
&\leq  \E\l[\1_{\{\tau_D\leq t\}}\e^{(\frac{\lambda}{2}+K_0)\tau_D}\frac{t-\tau_D}{t}\|\Hess (\phi)\|_{\partial D,\infty}\r]  \\
  &\quad + \|\bd \phi\|_{\infty} \l(K_1\sqrt{t}+\frac{K_2}{2}t\r)
    \,\e^{\big(\frac12\lambda+K_0\big)t}\\
&\quad + \|\bd \phi\|_{\infty}\frac{\e^{\big(\frac12\lambda+K_0\big) t}}{\sqrt{t}}.
\end{align*}
By Lemmas \ref{lem1} and \ref{lem3}, we have
\begin{align}\label{eq-Hess1}
|\Hess \phi (v,v)|
&\leq \E\Bigg\{\1_{\{\tau_D\leq t\}}\e^{(\frac{\lambda}{2}+K_0)\tau_D}\frac{t-\tau_D}{t}\Bigg[\max\big\{ \|H\|_{\partial D, \infty},\sigma\big\}\|N\phi\|_{\partial D, \infty}\notag\\
  &\quad +\|h\|_{\infty}^{\sigma}\e^{\frac{1}{2}(K_0+\sigma K_{h,2\sigma})(t-\tau_D)}\l(C_1\frac{1}{\sqrt{t-\tau_D}}+C_2\sqrt{t-\tau_D}\r)\|\phi\|_{\infty}\notag\\
  &\quad  +  \|h\|_{\infty}^{\sigma}\e^{\frac{1}{2}(K_0+\sigma K_{h,2\sigma})(t-\tau_D)}\l(\frac{1}{\sqrt{t-\tau_D}}+C_3\sqrt{t-\tau_D}\r)\|\bd \phi\|_{\infty}\notag\\
 &\quad   +\|h\|_{\infty}^{\sigma}\e^{\frac{1}{2}(K_0+\sigma K_{h,2\sigma})(t-\tau_D)}\sqrt{t-\tau_D}C_4\|\Hess \phi\|_{\infty}\Bigg]\Bigg\}  \notag \\
  &\quad + \|\bd \phi\|_{\infty} \l(K_1\sqrt{t}+\frac{K_2}{2}t\r)
    \,\e^{\big(\frac12\lambda+K_0\big)t}\notag\\
&\quad + \|\bd \phi\|_{\infty}\frac{\e^{\big(\frac12\lambda+K_0\big) t}}{\sqrt{t}},
\end{align}
where $C_1, C_2, C_3$ and $C_4$ are defined as in Lemma \ref{lem3}.
Combining this with
the fact that
\begin{align*}
\frac{t-\tau_D}{t}\frac{1}{\sqrt{t-\tau_D}}= \frac{\sqrt{t-\tau_D}}{t}\leq \frac{1}{\sqrt{t}}
\end{align*}
 and then substituting back into \eqref{eq-Hess1} and using \eqref{esti-Nphi},  we obtain 
\begin{align}\label{ineqn-Hess2}
  |\Hess \phi (v,v)|&\leq  (n-1)\sigma \e^{\big(\frac{\lambda}{2}+K_0\big)t} \sqrt{\e}  \l(\alpha+\sqrt{\frac{2\lambda}{\pi}}\r)\|\phi\|_{\infty} \notag\\
   & \quad  + \|h\|_{\infty}^{\sigma}\e^{\big(\frac{\lambda}{2}+K_0+\frac{\sigma K_{h,2\sigma}}{2}\big)t}\l(\frac{C_1}{\sqrt{t}}+ C_2\sqrt{t}\r)\|\phi\|_{\infty} \notag \\
   & \quad  + \|h\|_{\infty}^{\sigma}\e^{\big(\frac{\lambda}{2}+K_0+\frac{\sigma K_{h,2\sigma}}{2}\big)t}\l(\frac{1}{\sqrt{t}}+ C_3\sqrt{t}\r)\|\bd \phi\|_{\infty} \notag \\
 & \quad   +C_4\|h\|^{\sigma}_{\infty}\e^{\big(\frac{\lambda}{2}+K_0+\frac{\sigma K_{h,2\sigma}}{2}\big)t}\sqrt{t}\,\|\Hess \phi\|_{\infty} \notag \\
  &\quad + \l(\frac{1}{\sqrt{t}}+K_1\sqrt{t}+\frac{K_2}{2}t\r)
    \,\e^{\big(\frac12\lambda+K_0\big)t} \|\bd \phi\|_{\infty}.
\end{align}
Now let $$t=t_0:=\frac{1}{\max\big\{\lambda+2K_0+\sigma K_{h,2\sigma},\, 4\e\|h\|_{\infty}^{2\sigma}C_4^2\big\}}.$$ Then 
\begin{align*}
\|h\|_{\infty}^{\sigma}\e^{(\frac{\lambda}{2}+K_0+\frac{\sigma K_{h,2\sigma}}{2})t_0}\sqrt{t_0}\,C_4\|\Hess \phi\|_{\infty}\leq \frac{1}{2}\|\Hess \phi\|_{\infty}
\end{align*}
and then inequality \eqref{ineqn-Hess2} becomes
\begin{align}\label{Hess-phi1}
  |\Hess \phi (v,v)| 
&\leq 2(n-1)\sigma {\e} \l(\alpha+\sqrt{\frac{2\lambda}{\pi}}\r)\|\phi\|_{\infty}\notag\\
&\quad +2C_1\|h\|_{\infty}^{\sigma}\sqrt{\e} \max \l\{\sqrt{\lambda+2K_0+\sigma K_{h,2\sigma}},\, 2\sqrt{\e}\|h\|_{\infty}^{\sigma}C_4\r\}\|\phi\|_{\infty}\notag \\
&\quad + \frac{C_2}{C_4} \|\phi\|_{\infty}+\frac{C_3}{C_4} \|\bd \phi\|_{\infty} \notag\\
&\quad + 2\sqrt{\e}(\|h\|_{\infty}^{\sigma} +1) \max \l\{\sqrt{\lambda+2K_0+\sigma K_{h,2\sigma}},\, 2\sqrt{\e}\|h\|_{\infty}^{\sigma}C_4\r\} \|\bd \phi\|_{\infty}\notag\\
&\quad + \frac{2 K_1\sqrt{\e}}{\max\l\{\sqrt{\lambda+2K_0+\sigma K_{h,2\sigma}},\, 2\sqrt{\e}\|h\|_{\infty}^{\sigma}C_4\r\}}  \|\bd \phi\|_{\infty}\notag\\
&\quad +\frac{K_2 \sqrt{\e} }{\max\l\{\lambda+2K_0+\sigma K_{h,2\sigma},\, 4\e\|h\|_{\infty}^{2\sigma}C_4^2\r\}} \|\bd \phi\|_{\infty}\notag\\
&\leq 2\alpha {\e} \l(\alpha+\sqrt{\frac{2\lambda}{\pi}}\r)\|\phi\|_{\infty}\notag\\
&\quad +2C_1\|h\|_{\infty}^{\sigma}\sqrt{\e} \max \l\{\sqrt{\lambda+2K_0+\sigma K_{h,2\sigma}},\, 2\sqrt{\e}\|h\|_{\infty}^{\sigma}C_4\r\}\|\phi\|_{\infty}+ \frac{C_2}{C_4} \|\phi\|_{\infty} \notag\\
&\quad + 4\sqrt{\e}\|h\|_{\infty}^{\sigma}  \max \l\{\sqrt{\lambda+2K_0+\sigma K_{h,2\sigma}},\, 2\sqrt{\e}\|h\|_{\infty}^{\sigma}C_4\r\} \|\bd \phi\|_{\infty}\notag\\
&\quad +\l(\frac{C_3}{C_4} +\frac{K_1}{C_4} +\frac{K_2}{4\sqrt{\e}C_4^2}\r) \|\bd \phi\|_{\infty}
\end{align}

It is known from  Arnaudon, Thalmaier and Wang \cite[Eq. (2.8)]{ATW} that
\begin{align*}
\frac{\|\bd \phi\|_{\infty}}{\|\phi\|_{\infty}}&\leq 
\sqrt{\e}\l(\alpha+\sqrt{\frac{2}{\pi}(\lambda+K_0)}+\frac{\lambda+K_0}{4(\alpha+\sqrt{\frac{2}{\pi}(\lambda+K_0)} )}\r)\\
&\leq 
\sqrt{\e}\l(\alpha+\l(\sqrt{\frac{2}{\pi}}+ \frac{1}{4}\sqrt{\frac{\pi}{2}}\r)\sqrt{\lambda+K_0}\r).
\end{align*}
Note here we use the upper bound $\alpha^++\sqrt{\frac{2}{\pi t}}$ of  $f(t,\alpha)$ defined in \cite{ATW} to simplify the upper bound in \cite[Eq. (2.8)]{ATW}.  Next, 
combining this with \eqref{Hess-phi1} implies that
\begin{align}\label{add-Hess-D3}
\frac{\|\Hess \phi\|}{\|\phi\|_{\infty}}
&\leq  2\e (n-1)\sigma\l(\alpha+\sqrt{\frac{2\lambda}{\pi}}\r)\notag\\
&\quad+2C_1\|h\|_{\infty}^{\sigma}\sqrt{\e} \max \l\{\sqrt{\lambda+2K_0+\sigma K_{h,2\sigma}},\, 2\sqrt{\e}\|h\|_{\infty}^{\sigma}C_4\r\}  + \frac{C_2}{C_4}\notag\\
&\quad+\l(\frac{C_3}{C_4} +\frac{K_1}{C_4} +\frac{K_2}{4\sqrt{\e}C_4^2}\r) \sqrt{\e}\l(\alpha+\l(\sqrt{\frac{2}{\pi}}+ \frac{1}{4}\sqrt{\frac{\pi}{2}}\r)\sqrt{\lambda+K_0}\r) \notag\\
&\quad + 4{\e}\|h\|_{\infty}^{\sigma} \max \l\{\sqrt{\lambda+2K_0+\sigma K_{h,2\sigma}},\, 2\sqrt{\e}\|h\|_{\infty}^{\sigma}C_4\r\}\l(\alpha+\l(\sqrt{\frac{2}{\pi}}+ \frac{1}{4}\sqrt{\frac{\pi}{2}}\r)\sqrt{\lambda+K_0}\r).
\end{align}
Using condition \ref{condition-rho1},  the constants 
 $C_1, C_2, C_3$ and $C_4$ now become
\begin{align*}
&C_1= \alpha, \\
& C_2 = \|\psi'\|_{\infty}(\alpha^2+2\beta)+ \|\psi''\|_{\infty}\alpha +(\lambda \alpha+\gamma),\\
&C_3=3\|\psi'\|_{\infty} \alpha + \|\psi''\|_{\infty}  +3\beta+\lambda,\\
&C_4= 2\|\psi'\|_{\infty}+2\alpha.
 \end{align*}
 Let
\begin{equation}\label{def-psi}
\psi(r)=\begin{cases}
\l(\frac{r_0-r}{r_0}\r)^3,\quad & 0\leq r\leq r_0;  \\
0,\quad &  r>r_0,
\end{cases}.
\end{equation} 
Then $\psi'\leq \frac{3}{r_0}$ and $\psi'' \leq \frac{6}{r_0^2}$. Form these estimates, the 
constants $C_1, C_2,C_3$ and  $C_4$ are further explicit.
\end{proof}

\subsection{Proof of Theorem \ref{main-theorem1}}\label{Section-Construction-of-h}
In this subsection we describe F.-Y. Wang's  construction of functions $h$ satisfying the requirements of Lemma~\ref{esti:local-time} (see
\cite[p.~1436]{W07} or \cite[Theorem 3.2.9]{Wbook2} for the details). His construction is performed under the following condition.
\medskip

% In this subsection we first explain that if more
% refined geometric information about the boundary is available (for instance as {\bf
%   Condition (B)} below), then
% following F.-Y. Wang's construction of the function $h$ (see
% \cite[p.1436]{W07} or \cite[Theorem 3.2.9]{Wbook2}), we can derive
% explicit upper bounds for $\|h\|_{\infty}$ and the constant $K_{h, \alpha}$. 
% See Theorem  \ref{Neumann-them} above for a precise formulation of the result.
% Let us first formulate a more refined condition on the boundary. 
% \medskip

\noindent{\bf Condition (A) }
There exist a non-negative constant $\sigma$ such that $\II \leq \sigma$ and a positive constant $r_1$ such that the distance function $\rho_{\partial }$
to the boundary $\partial D$ is smooth on $\partial_{r_1}D:=\{x\in D: \rho_{\partial}(x)\leq r_1\}$. Moreover, $\Sect\leq k$ on $\partial_{r_1}D$ for some positive constant $k$.
\medskip
% \noindent{\bf Condition (B) }
% There exists a non-negative constant $\sigma$ such that $\II \leq \sigma$ and a positive constant $r_1$ such that on $\partial_{r_1}D:=\{x\in D: \rho_{\partial}(x)\leq r_1\}$ the distance function $\rho_{\partial }$
% to the boundary $\partial D$ is smooth and let  $k$ be a positive constant such that $\Sect\leq k$ on $\partial_{r_1}D$.
% \medskip

Under Condition (A), based on the Hessian comparison theorem, F.-Y. Wang then constructs a function $h$ satisfying the necessary properties of Lemma \ref{esti:local-time}
(see \cite[p.~1436]{W07} or \cite[Theorem 3.2.9]{Wbook2} for the notation and the precise result),
along with explicit upper bounds for $\|h\|_{\infty}$ and the constant $K_{h, \alpha}$.
Modifying his construction one may take
\begin{align}\label{def-h}
\log h(x)=\frac{1}{\Lambda_0}\int_0^{\rho_{\partial}(x)}\l(\ell(s)-\ell(r_0)\r)^{1-n}\, d s \int_{s\wedge r_0}^{r_0}\l(\ell(u)-\ell(r_0)\r)^{n-1}\,du
\end{align}
where $\ell=\ell_{\sigma, \ell}$ is defined in \eqref{fun-h}, $r_0:=r_1\wedge \ell^{-1}(0)$ and $$ \Lambda_0:=(1-\ell(r_0))^{1-n}\int_0^{r_0}\l(\ell(s)-\ell(r_0)\r)^{n-1}\, d s.$$
Then from the proof of \cite[Theorem 1.1]{Wa05}, we get:
\begin{align}\label{esti-Kh}
K_{h,\alpha}\leq {K}_{\alpha}:=\frac{n}{r_0}+\alpha \quad \mbox{and} \quad \|h\|_{\infty}\leq 
\e^{\frac12 n r_0}.
\end{align}
% Using Condition (B) and the Hessian comparison theorem, F.-Y. Wang constructed a function $h\in \mathscr{D}$
%    (see \cite[p.1436]{W07} or \cite[Theorem 3.2.9]{Wbook2} for the notation and result).
% Modifying his construction one may take
% \begin{align}\label{def-h}
% \log h(x)=\frac{1}{\Lambda_0}\int_0^{\rho_{\partial}(x)}\l(\ell(s)-\ell(r_0)\r)^{1-n}\, d s \int_{s\wedge r_0}^{r_0}\l(\ell(u)-\ell(r_0)\r)^{n-1}\,du
% \end{align}
% where $\ell=\ell_{\sigma, \ell}$ is defined in \eqref{fun-h}, $r_0:=r_1\wedge \ell^{-1}(0)$ and $$ \Lambda_0:=(1-\ell(r_0))^{1-n}\int_0^{r_0}\l(\ell(s)-\ell(r_0)\r)^{n-1}\, d s.$$
% Then from the proof of \cite[Theorem 1.1]{Wa05}, we get:
% \begin{align}\label{esti-Kh}
% K_{h,\alpha}\leq {K}_{\sigma}:=\frac{n}{r_0}+\sigma \quad \mbox{and} \quad \|h\|_{\infty}\leq 
% \e^{\frac12 n r_0}.
% \end{align}

Using the $h$ constructed above, we are now able to complete the proof of Theorem \ref{main-theorem1}.

\begin{proof}[Proof of Theorem \ref{main-theorem1}]
Using $h$  defined in \eqref{def-h} and substituting the estimates \eqref{esti-Kh}, we
 replace 
 $$K_{h,2\sigma},\ \, \|h\|_{\infty},$$
 by
 $$\frac{n}{r_0}+2\sigma,\ \, \e^{nr_0/2},$$
 respectively.
 By Lemma \ref{Hessian Comparison}, we see that the upper bound $\alpha$ in Theorem \ref{condition-rho1}
 can be chosen as $2(n-1)\max\{\sigma, \sqrt{k}\}$.
 This completes the proof of inequality \eqref{main-ineq1}.
\end{proof}

\section{Hessian estimates on Neumann eigenfunctions of Laplacian}\label{Neuman-Section}

We also use a stochastic approach to prove Theorem \ref{Neumann-them}. Let us first recall the
Hessian formulas for the Neumann semigroups, established recently in \cite{CT22}. 
The reflecting Brownian motion on $D$ with generator $\frac12\Delta$ satisfies the SDE
$$d X_t=\ptr_t\circ\,d B_t^x+\frac12N(X_t)\, d l_t,\quad X_0=x,$$
where $B_t^x$ is a standard Brownian motion on the Euclidean space $T_xD\cong \R^n$.
We write again $X_t=X_t(x)$ to indicate the starting point $x\in D$ (which may be on the boundary $\partial D$).
Here $\ptr_t: T_xD\rightarrow T_{X_t(x)}D$ denotes the $\nabla$-parallel
transport along $X_t(x)$ and
$l_t$ the local time of $X_t(x)$ supported on $\partial D$.
Note that the reflecting Brownian motion $X_t(x)$ is defined for all $t\geq0$. 

Suppose that $\tilde{Q}_t\colon T_xD\rightarrow T_{X_t(x)}D$ satisfies
\begin{align}\label{eq-tildeQ}
 \text{D}\tilde{Q}_t=-\frac12\Ric^{\sharp}(\tilde{Q}_t)\, d t+\frac12(\nabla N)^ {\sharp}(\tilde{Q}_t)\, d l_t, \quad \tilde{Q}_0=\id.
\end{align}
For $k\in C^1_b([0,\infty);\R)$ define an operator-valued process
$\tilde{W}_t^k\colon T_xD\otimes T_xD\rightarrow T_{X_t(x)}D$ as solution to the following  covariant It\^{o} equation
\begin{align}\label{eq-tildeW}
\text{D}\tilde{W}_t^k(v, w)&=R(\ptr_t\, d B_t, \tilde{Q}_t(k(t)v))\tilde{Q}_t(w)\notag\\
&\quad-\frac12(\bd^*R+\nabla \Ric)^{\sharp}(\tilde{Q}_t(k(t)v), \tilde{Q}_t(w))\, d t\notag\\
&\quad-\frac12(\nabla^2N-R(N))^{\sharp}(\tilde{Q}_t(k(t)v), \tilde{Q}_t(w))\,d l_t\notag\\
&\quad-\frac12\Ric^{\sharp}(\tilde{W}_t^k(v, w))\,d t+\frac12(\nabla N)^{\sharp}(\tilde{W}_t^k(v, w))\,d l_t,
\end{align}
with initial condition $\tilde{W}_0^k(v,w)=0$.

\begin{theorem}[\cite{CT22}]\label{th5}
Let $D$ be a compact Riemannian manifold with
boundary $\partial D$. Let $X(x)$ be the reflecting Brownian motion on $D$ with
starting point $x$ (possibly on the boundary) and denote by $P_tf(x)=\E[f(X_t(x)]$
the corresponding Neumann semigroup acting on $f\in \mathcal{B}_b(D)$.
Then, for $v\in T_xD$, $t\geq 0$ and $k\in C^1_b([0,\infty); \R)$,
 \begin{equation*}
    \Hess {P_tf}(v,v)=\E\l[-d f(\tilde{Q}_t(v))\int_0^t\langle \tilde{Q}_s(\dot{k}(s)v),
    \ptr_s d  B_s\rangle+ d f( \tilde{W}_t^k(v,v))\r].
  \end{equation*}
\end{theorem}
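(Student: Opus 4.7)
The plan is to apply Theorem~\ref{th5} to $f=\phi$, exploit the eigenfunction identity $P_t\phi=\e^{-\lambda t/2}\phi$, and multiply the representation through by $\e^{\lambda t/2}$ to obtain
\[
  \Hess\phi(v,v)=\e^{\lambda t/2}\,\E\l[-\bd\phi(\tilde Q_t(v))\int_0^t\langle \tilde Q_s(\dot k(s)v),\ptr_s d B_s\rangle+\bd\phi(\tilde W_t^k(v,v))\r].
\]
I would take $k(s)=(t-s)/t$ on $[0,t]$, so that $k(0)=1$, $k(t)=0$, $\dot k\equiv -1/t$, and the stochastic-integral factor reduces via Cauchy--Schwarz and the It\^o isometry to $\tfrac{1}{\sqrt t}\bigl(\tfrac1t\E\!\int_0^t|\tilde Q_s v|^2 ds\bigr)^{1/2}$.

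I then need pathwise and $L^1$ bounds on $\tilde Q$ and $\tilde W^k$. From \eqref{eq-tildeQ}, combining $\Ric\geq -K_0$ with the boundary bound $-\sigma_1\leq \II$ (so that $(\nabla N)^\sharp\leq \sigma_1\,\id$ on tangential directions, while $\nabla_N N=0$) yields the pointwise inequality $|\tilde Q_t|\leq \exp(\tfrac12 K_0 t+\tfrac12\sigma_1 l_t)$. For $\tilde W_t^k$, applying the covariant It\^o calculus used in the proof of Lemma~\ref{th3} to the three driving terms of \eqref{eq-tildeW} --- the martingale part $R(\ptr_t dB_t,\cdot)$ controlled by $|R|\leq K_1$, the $dt$-drift by $|\bd^*R+\nabla\Ric|\leq K_2$, and the new $dl_t$-drift by $|\nabla^2 N-R(N)|\leq \sigma_2$ --- yields an $L^1$-bound of the form
\[
  \E|\tilde W_t^k(v,v)|\lesssim \l(K_1\sqrt{\int_0^t k(s)^2 ds}+\tfrac{K_2}{2}\int_0^t|k(s)|\,ds+\tfrac{\sigma_2}{2}\E\!\int_0^t|k(s)|\,dl_s\r)\e^{K_0 t/2}\E[\e^{c\sigma_1 l_t}]^{1/2}.
\]

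The final ingredient is an exponential moment of $l_t$. I would invoke Lemma~\ref{esti:local-time} with Wang's explicit function $h$ from \eqref{def-h}, whose construction is valid under Condition~(A) with $|\II|\leq \sigma_1\vee\sigma$ and $\Sect\leq k$; together with the bounds $\|h\|_\infty\leq \e^{n r_0/2}$ and $K_{h,\alpha}\leq n/r_0+2\sigma_1$ of \eqref{esti-Kh}, this produces precisely the factor $\e^{\frac32\sigma_1 n r_1}$ appearing in $C_{N,\lambda}(D)$. Inserting these into the Hessian formula, together with the Neumann gradient bound $\|\bd\phi\|_\infty\leq \sqrt{\e}\bigl(\alpha_N+c\sqrt{\lambda+K_0}\bigr)\|\phi\|_\infty$ from \cite{ATW}, yields, after dividing by $\|\phi\|_\infty$, an inequality of shape $\bigl(A(t)+B(t)/\sqrt t\bigr)\e^{\lambda t/2}$ whose coefficients are explicit in the curvature data.

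Optimizing in $t$ with the choice $t=1/\bigl(2\lambda+4K_0+4\sigma_1(n/r_0+2\sigma_1)\bigr)$ balances $\e^{\lambda t/2}$ against the $1/\sqrt t$ singularity and reproduces the square-root denominator $\sqrt{2\lambda+4K_0+4\sigma_1(n/r_0+2\sigma_1)}$ together with the universal $\e^{1}$ factor visible in the statement. The second claim, $C_{N,\lambda}(D)\leq C_{N,\lambda_1}(D)$, is immediate from the monotonicity of the stated constant in $\lambda$. The main technical obstacle is the accounting of the $\e^{\sigma_1 l_t}$-factors: every Cauchy--Schwarz split of an expectation involving $\tilde Q_s$ or $\tilde W_t^k$ raises the exponent, and one must calibrate these increments so that after applying Lemma~\ref{esti:local-time} the cumulative constant matches exactly the exponent $\tfrac32 \sigma_1 n r_1$ (or $\tfrac32\sigma_1 n r_0$) recorded in $C_{N,\lambda}(D)$, rather than producing an exponent that degrades with $\lambda$ or with the number of splittings used.
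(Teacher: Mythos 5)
Your proposal does not prove the statement in question. The statement is Theorem~\ref{th5} itself: the Bismut-type Hessian formula
\begin{equation*}
 \Hess P_tf(v,v)=\E\Bigl[-\bd f(\tilde{Q}_t(v))\int_0^t\langle \tilde{Q}_s(\dot{k}(s)v),\ptr_s\,dB_s\rangle+\bd f(\tilde{W}_t^k(v,v))\Bigr]
\end{equation*}
for the Neumann semigroup $P_t$ and arbitrary $f\in\mathcal{B}_b(D)$. Your very first step is ``apply Theorem~\ref{th5} to $f=\phi$'', and everything that follows (the choice $k(s)=(t-s)/t$, the bounds on $\tilde Q$ and $\tilde W^k$, the local-time exponential moments via Lemma~\ref{esti:local-time} and Wang's function $h$, the optimization in $t$) is a sketch of the proof of Theorem~\ref{Neumann-th0}/Theorem~\ref{Neumann-them}, i.e.\ of a downstream consequence of the formula, not of the formula itself. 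As an argument for Theorem~\ref{th5} this is circular: you assume the identity you were asked to establish.

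What a proof of Theorem~\ref{th5} actually requires is the martingale argument behind the representation: one shows that, for fixed $T>0$, a process of the form $\Hess P_{T-t}f\bigl(\tilde Q_t(k(t)v),\tilde Q_t(v)\bigr)+\bd P_{T-t}f\bigl(\tilde W^k_t(v,v)\bigr)-\bd P_{T-t}f(\tilde Q_t(v))\int_0^t\langle \tilde Q_s(\dot k(s)v),\ptr_s\,dB_s\rangle$ is a martingale for the \emph{reflecting} Brownian motion, where the whole point is that the local-time terms $\tfrac12(\nabla N)^\sharp(\tilde Q_t)\,dl_t$ in \eqref{eq-tildeQ} and $-\tfrac12(\nabla^2N-R(N))^\sharp(\cdot,\cdot)\,dl_t+\tfrac12(\nabla N)^\sharp(\tilde W^k_t)\,dl_t$ in \eqref{eq-tildeW} are engineered to cancel the boundary contributions produced by It\^o's formula applied to $\bd P_{T-t}f$ and $\Hess P_{T-t}f$ under the Neumann condition $NP_{T-t}f|_{\partial D}=0$; taking expectations at $t=0$ and $t=T$ with $k(0)=1$, $k(T)=0$ then yields the identity. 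None of this boundary cancellation is addressed in your proposal. The paper itself does not reprove this result either; it is quoted from \cite{CT22}, so at minimum your write-up should either cite that reference for the formula or reproduce the martingale/cancellation argument, rather than use the formula to prove the eigenfunction estimate and present that as the proof.
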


Estimating  $\tilde{W}^k$ and $ \tilde{Q}$ explicitly, we can get pointwise bounds
for the Hessian of Neumann eigenfunctions.

\begin{corollary}\label{th-esti-HS}
We keep the assumptions of Theorem \ref{th5}.  Let $K_0, K_1, K_2$ and $\sigma_1, \sigma_2$ be non-negative constants such that 
$\Ric\geq -K_0$, $|R|\leq K_1$ and $|\bd^* R+\nabla \Ric|\leq K_2$ on $D$, and $\II \geq -\sigma_1$,
$|\nabla^2 N+R(N)|<\sigma_2$ on the boundary $\partial D$. 
 Then, 
for $(\phi,\lambda)\in\Eig_N(D)$,
\begin{align*}
|\Hess \phi|(x)\leq & \e^{(\frac12\lambda +K_0)t}\E[\e^{\sigma_1 l_t}]\l(\frac{1}{\sqrt{t}}+K_1 \sqrt{t}+\frac{K_2}{2} t \r)\|\bd \phi\|_{\infty}\\
& +\frac{\sigma_2}{2} \e^{(K_0+\frac{\lambda}{2})t}\E \l(\e^{\frac{1}{2}\sigma_1 l_t}\int_0^t\e^{\frac{1}{2}\sigma_1 l_s}\, d l_s\r)\|\bd \phi\|_{\infty}.
\end{align*}
\end{corollary}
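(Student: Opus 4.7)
The plan is to derive the inequality from the Neumann Hessian formula of Theorem~\ref{th5}. Applied to $f=\phi$ with $P_t\phi=\e^{-\lambda t/2}\phi$, the formula gives
\begin{equation*}
\Hess\phi(v,v)=\e^{\lambda t/2}\,\E\Bigl[-\bd\phi(\tilde Q_t v)\!\int_0^t\!\langle\tilde Q_s(\dot k(s)v),\ptr_s dB_s\rangle+\bd\phi(\tilde W_t^k(v,v))\Bigr],
\end{equation*}
and I would specialize $k(s)=(t-s)/t$, so that $k(0)=1$, $k(t)=0$, and $\dot k(s)=-1/t$. The basic pointwise ingredient is $|\tilde Q_t|^2\leq\e^{K_0 t+\sigma_1 l_t}$, obtained by applying the covariant It\^o formula to $|\tilde Q_t|^2$ together with $\Ric\geq-K_0$ and $\langle(\nabla N)^{\sharp}u,u\rangle\leq\sigma_1|u|^2$ (equivalent to $\II\geq-\sigma_1$). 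The same argument applied to $\tilde Q_{s,t}:=\tilde Q_t\tilde Q_s^{-1}$, which satisfies the same SDE initialized at time $s$, yields $|\tilde Q_{s,t}|\leq\e^{K_0(t-s)/2+\sigma_1(l_t-l_s)/2}$, and hence $|\tilde Q_{s,t}|\cdot|\tilde Q_s|^2\leq\e^{K_0 t+\sigma_1 l_t}$ pathwise after using $s\leq t$ and $l_s\leq l_t$.

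For the stochastic integral term I apply $|\bd\phi(\cdot)|\leq\|\bd\phi\|_{\infty}|\cdot|$, Cauchy--Schwarz, and the It\^o isometry:
\begin{equation*}
\E\Bigl[|\tilde Q_t v|\cdot\Bigl|\int_0^t\!\langle\tilde Q_s(\dot k(s)v),\ptr_s dB_s\rangle\Bigr|\Bigr]\leq\bigl(\E|\tilde Q_t v|^2\bigr)^{1/2}\Bigl(\int_0^t\dot k(s)^2\,\E|\tilde Q_s v|^2\,ds\Bigr)^{1/2}.
\end{equation*}
Each factor is bounded by $|v|\,\e^{K_0 t/2}(\E[\e^{\sigma_1 l_t}])^{1/2}$; for the second factor I use $\dot k(s)^2=1/t^2$, $l_s\leq l_t$, and $\int_0^t\e^{K_0 s}\,ds\leq t\e^{K_0 t}$. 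Multiplied together and weighted by $\e^{\lambda t/2}\|\bd\phi\|_{\infty}$, this yields the $\e^{(K_0+\lambda/2)t}\,\E[\e^{\sigma_1 l_t}]/\sqrt t$ contribution.

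Next, variation of constants in \eqref{eq-tildeW} decomposes $\tilde W_t^k(v,v)=J_1+J_2+J_3$, where $J_1$ is the stochastic integral with kernel $R$, $J_2$ the $ds$-drift with kernel $\bd^*R+\nabla\Ric$, and $J_3$ the $dl_s$-drift with kernel $\nabla^2 N-R(N)$. The pathwise estimate $|\tilde Q_{s,t}|\cdot|\tilde Q_s|^2\leq\e^{K_0 t+\sigma_1 l_t}$ handles $J_2$ and $J_3$ immediately:
\begin{equation*}
|J_2|\leq\frac{K_2 t}{2}|v|^2\e^{K_0 t+\sigma_1 l_t},\qquad|J_3|\leq\frac{\sigma_2}{2}|v|^2\e^{K_0 t}\e^{\sigma_1 l_t/2}\int_0^t\e^{\sigma_1 l_s/2}\,dl_s,
\end{equation*}
where for $J_3$ I would keep the local-time exponent split as $(l_t+l_s)/2$ instead of collapsing to $l_t$, so that the resulting expectation matches the second term of the stated bound. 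Taking expectations and multiplying by $\e^{\lambda t/2}\|\bd\phi\|_{\infty}$ produces the $K_2 t/2$ and $\sigma_2$ contributions.

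The main obstacle is the martingale piece $J_1$, whose natural It\^o representation contains $\tilde Q_s^{-1}$, which admits no direct pointwise bound. I bypass this by setting $Z_t:=J_1$ and observing from \eqref{eq-tildeW} that
\begin{equation*}
\mathrm{D}Z_t=R(\ptr_t dB_t,\tilde Q_t(k(t)v))\tilde Q_t(v)-\tfrac12\Ric^{\sharp}(Z_t)\,dt+\tfrac12(\nabla N)^{\sharp}(Z_t)\,dl_t.
\end{equation*}
The covariant It\^o formula for $|Z_t|^2$, combined with $|R|\leq K_1$, $\Ric\geq-K_0$, $\langle(\nabla N)^{\sharp}z,z\rangle\leq\sigma_1|z|^2$, and the pathwise bound on $|\tilde Q_t|$, yields
\begin{equation*}
\E[\e^{-K_0 t-\sigma_1 l_t}|Z_t|^2]\leq K_1^2|v|^4\int_0^t k(s)^2\e^{K_0 s}\,ds\cdot\E[\e^{\sigma_1 l_t}]\leq K_1^2 t\,\e^{K_0 t}|v|^4\,\E[\e^{\sigma_1 l_t}].
\end{equation*}
A further Cauchy--Schwarz split
\begin{equation*}
\E|Z_t|\leq\bigl(\E[\e^{K_0 t+\sigma_1 l_t}]\bigr)^{1/2}\bigl(\E[\e^{-K_0 t-\sigma_1 l_t}|Z_t|^2]\bigr)^{1/2}
\end{equation*}
then produces $\E|Z_t|\leq K_1\sqrt t\,\e^{K_0 t}\,\E[\e^{\sigma_1 l_t}]|v|^2$, delivering the $K_1\sqrt t$ term. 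Summing the four contributions and multiplying by $\e^{\lambda t/2}\|\bd\phi\|_{\infty}$ yields the asserted pointwise bound.
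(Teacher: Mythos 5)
Your proof is correct. It takes a genuinely different route from the paper's: the paper proves the corollary in one line, citing the semigroup Hessian estimate as a black box from [CT22, Theorem~4.1] and then substituting $P_t\phi=\e^{-\lambda t/2}\phi$, whereas you re-derive that estimate from scratch starting from the Bismut-type formula of Theorem~\ref{th5}. Your derivation is the one that actually carries the mathematical content: the choice $k(s)=(t-s)/t$, the pathwise bounds $|\tilde Q_t|\leq\e^{K_0 t/2+\sigma_1 l_t/2}$ and $|\tilde Q_{s,t}|\,|\tilde Q_s|^2\leq\e^{K_0(t+s)/2+\sigma_1(l_t+l_s)/2}$, the variation-of-constants split $\tilde W^k_t=J_1+J_2+J_3$, and in particular the observation that $J_1$ solves its own covariant SDE (so that one can bound $\E|J_1|$ without ever touching $\tilde Q_s^{-1}$) are precisely the steps the paper outsources. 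That last device is the same one the paper uses in Lemma~\ref{th3} for the Dirichlet case without local time, so your argument is fully consistent with the paper's methodology. Your decision to keep the local-time exponent split as $\sigma_1(l_t+l_s)/2$ in $J_3$ rather than collapsing to $\sigma_1 l_t$ is the right move to match the exact form $\E[\e^{\sigma_1 l_t/2}\int_0^t\e^{\sigma_1 l_s/2}\,dl_s]$ in the statement. The only thing to note is that both your argument and the paper's rely on a result from [CT22] (you use Theorem~\ref{th5}, the paper uses the derived Theorem~4.1), so neither proof is entirely independent of that reference; but yours makes the corollary self-contained given only the Bismut formula, which is a genuine gain.
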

\begin{proof}
By \cite[Theorem 4.1]{CT22} the Hessian of the semigroup can be estimated as
\begin{align*}
|\Hess {P_tf}|& \leq \l(K_1 \sqrt{t}+\frac{K_2}{2} t+\frac{1}{\sqrt{t}} \r) \E\l[\e^{\sigma_1 l_t}\r]\e^{K_0t}  \|\nabla f\|_{\infty} \\
&\quad  +\frac{\sigma_2}{2}  \E \l(\e^{\frac{1}{2}\sigma_1 l_t}\int_0^t\e^{\frac{1}{2}\sigma_1 l_s}\, d l_s\r)\e^{K_0t}\|\nabla f\|_{\infty}.
\end{align*}
We complete the proof by observing that $P_t\phi=\e^{-\lambda t/2}\phi$.
\end{proof}

Combining Theorem \ref{th-esti-HS} and Lemma \ref{esti:local-time}, we are now in a position to prove
Theorem \ref{Neumann-them}. 
\begin{theorem}\label{Neumann-th0}
Let $D$ be an $n$-dimensional compact Riemannian manifold with
boundary $\partial D$. Let $K_0,K_1,K_2, \sigma_1, \sigma_2$ be non-negative constants such that $\Ric\geq -K_0$, $|R|\leq K_1$ and
$|\bd^* R+\nabla \Ric|\leq K_2$ on $D$, and that
$\II\geq -\sigma_1$ and $|\nabla^2 N-R(N)|\leq \sigma_2$ on the
boundary $\partial D$.  For $h\in C^{\infty}(D)$ with
$\min_Dh =1$ and $N\log h|_{\partial D}\geq 1$, let
$K_{h,\alpha}:=\sup_{D}\{-\Delta \log h+\alpha |\nabla \log h|^2\}$
with $\alpha$ a non-negative constant. Then for any non-trivial
$(\phi,\lambda) \in \Eig_N(\Delta)$, 
\begin{align*}
\frac{\|\Hess \,\phi\|_{\infty}}{\|\phi\|_{\infty}}\leq C_{N,\lambda}(D)\lambda
\end{align*}
where
\begin{align*}
C_{N,\lambda}(D)=&\e\l(1+\frac{K_1+2K_0+2\sigma_1 K_{h,2\sigma_1}}{\lambda}+\frac{K_2+2\sigma_2 K_{h,2\sigma_1}}{\lambda\sqrt{2\lambda+4K_0+4\sigma_1 K_{h,2 \sigma_1}}}\r)\|h\|_{\infty}^{3\sigma_1}\\
&+\frac{ \sigma_2 \e}{\lambda}\sqrt{2\lambda+4K_0+4\sigma_1 K_{h,2\sigma_1}}\|h\|_{\infty}^{3\sigma_1}\ln \|h\|_{\infty};
\end{align*}
\end{theorem}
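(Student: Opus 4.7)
The plan is to combine the pointwise Hessian bound from Corollary~\ref{th-esti-HS} with the exponential local-time estimate of Lemma~\ref{esti:local-time} and a gradient bound for $\|\bd\phi\|_\infty$, then to optimize the free time parameter $t$. Two expectations in Corollary~\ref{th-esti-HS} must be controlled: $\E[\e^{\sigma_1 l_t}]$ and $\E\bigl[\e^{\sigma_1 l_t/2}\int_0^t \e^{\sigma_1 l_s/2}\,dl_s\bigr]$. The first is immediate from Lemma~\ref{esti:local-time} with $\alpha=2\sigma_1$, giving $\E[\e^{\sigma_1 l_t}]\leq \|h\|_\infty^{2\sigma_1}\exp(\sigma_1 K_{h,2\sigma_1} t)$.

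For the second expectation, I first use the pathwise identity $\int_0^t \e^{\sigma_1 l_s/2}\,dl_s=\frac{2}{\sigma_1}(\e^{\sigma_1 l_t/2}-1)$, so that $\e^{\sigma_1 l_t/2}\int_0^t \e^{\sigma_1 l_s/2}\,dl_s=\frac{2}{\sigma_1}(\e^{\sigma_1 l_t}-\e^{\sigma_1 l_t/2})\leq l_t\,\e^{\sigma_1 l_t}$ by the elementary inequality $\e^y-1\leq y\,\e^y$. Bounding $l_t\leq \epsilon^{-1}\e^{\epsilon l_t}$ for arbitrary $\epsilon>0$ and applying Lemma~\ref{esti:local-time} with $\alpha=2(\sigma_1+\epsilon)$ then yields
\begin{equation*}
\E\l[l_t\,\e^{\sigma_1 l_t}\r]\leq \epsilon^{-1}\|h\|_\infty^{2(\sigma_1+\epsilon)}\exp\l((\sigma_1+\epsilon)K_{h,2(\sigma_1+\epsilon)}t\r),
\end{equation*}
and the choice $\epsilon=1/(2\ln\|h\|_\infty)$ produces a bound proportional to $\ln\|h\|_\infty\cdot \|h\|_\infty^{2\sigma_1}\exp(\sigma_1 K_{h,2\sigma_1}t)$ (up to a vanishing correction in the exponent), which is the source of the $\ln\|h\|_\infty$ factor in $C_{N,\lambda}(D)$.

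The norm $\|\bd\phi\|_\infty$ is controlled via the Bismut formula for the Neumann semigroup (cf.\ \cite[Theorem 3.2.1]{Wbook2}) combined with $P_t\phi=\e^{-\lambda t/2}\phi$ and Lemma~\ref{esti:local-time}: this yields $\|\bd\phi\|_\infty\leq \|h\|_\infty^{\sigma_1} t^{-1/2}\e^{(\lambda+K_0+\sigma_1 K_{h,2\sigma_1})t/2}\|\phi\|_\infty$, and optimizing in $t$ gives a bound of the form $\sqrt{\e(\lambda+K_0+\sigma_1 K_{h,2\sigma_1})}\|h\|_\infty^{\sigma_1}\|\phi\|_\infty$. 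Together with the two expectation bounds above, this produces the $\|h\|_\infty^{3\sigma_1}$ power appearing in $C_{N,\lambda}(D)$.

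Substituting everything into Corollary~\ref{th-esti-HS} and choosing $t=(\lambda+2K_0+2\sigma_1 K_{h,2\sigma_1})^{-1}$, the exponential prefactor becomes $\e^{1/2}$, $t^{-1/2}$ equals $\sqrt{\lambda+2K_0+2\sigma_1 K_{h,2\sigma_1}}=(2\lambda+4K_0+4\sigma_1 K_{h,2\sigma_1})^{1/2}/\sqrt{2}$, and $\sqrt{t}$, $t$ give the corresponding negative powers. Distributing these across the three terms $t^{-1/2}$, $K_1\sqrt{t}$, $K_2 t/2$ and the separate $\sigma_2$-term, multiplying by the gradient bound, and finally dividing by $\lambda\|\phi\|_\infty$, produces exactly the two-part structure of $C_{N,\lambda}(D)$: the $\e\bigl(1+\cdots\bigr)\|h\|_\infty^{3\sigma_1}$ line from the first expectation, and the $\frac{\sigma_2\e}{\lambda}\sqrt{2\lambda+4K_0+4\sigma_1 K_{h,2\sigma_1}}\|h\|_\infty^{3\sigma_1}\ln\|h\|_\infty$ line from the logarithmic optimization. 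The main obstacle is precisely this optimization step: the naive alternative $\int_0^t \e^{\sigma_1 l_s/2}\,dl_s\leq \frac{2}{\sigma_1}\e^{\sigma_1 l_t/2}$ gives a constant of order $1/\sigma_1$ that blows up as $\sigma_1\to 0$, whereas the $\epsilon$-optimization supplies the tame $\ln\|h\|_\infty$ dependence appearing in the theorem. The replacement of $\lambda$ by the first Neumann eigenvalue $\lambda_1$ at the very end follows from the monotonicity of $C_{N,\lambda}(D)$ in $\lambda$.
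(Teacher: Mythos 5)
Your overall architecture matches the paper's: you combine Corollary~\ref{th-esti-HS}, Lemma~\ref{esti:local-time} (with $\alpha=2\sigma_1$), a Bismut--Neumann bound for $\|\bd\phi\|_\infty$, and then optimize in $t$, and your treatments of $\E[\e^{\sigma_1 l_t}]$ and of $\|\bd\phi\|_\infty/\|\phi\|_\infty$ are correct and identical to the paper's. However, the step controlling $\E\bigl[\e^{\sigma_1 l_t/2}\int_0^t\e^{\sigma_1 l_s/2}\,dl_s\bigr]$ has a genuine gap, and it is exactly the step where the paper does something more delicate than what you propose.

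Your chain $\e^{\sigma_1 l_t/2}\int_0^t\e^{\sigma_1 l_s/2}\,dl_s\le l_t\,\e^{\sigma_1 l_t}\le \epsilon^{-1}\e^{(\sigma_1+\epsilon)l_t}$ followed by Lemma~\ref{esti:local-time} and the choice $\epsilon=1/(2\ln\|h\|_\infty)$ gives the bound $2\e\,\ln\|h\|_\infty\,\|h\|_\infty^{2\sigma_1}\exp\bigl((\sigma_1+\epsilon)K_{h,2(\sigma_1+\epsilon)}t\bigr)$. Here $\epsilon$ is a \emph{fixed} positive number, so the correction in the exponent is \emph{not} vanishing, the constant depends on $K_{h,2(\sigma_1+\epsilon)}$ rather than on $K_{h,2\sigma_1}$, and there is an extra factor $\e=\|h\|_\infty^{2\epsilon}$; you cannot send $\epsilon\to 0$ because $\epsilon^{-1}$ diverges. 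Consequently this does not recover the constant $C_{N,\lambda}(D)$ of the theorem, which involves only $K_{h,2\sigma_1}$ and has the two-part structure $\bigl(\cdots+2\sigma_2 K_{h,2\sigma_1}\cdots\bigr)$ plus a $\ln\|h\|_\infty$ term. The paper instead writes (using $l_s\ge 0$ and the exact pathwise identity with parameter $\sigma_1+\eps$)
\begin{align*}
\E\Bigl[\e^{\tfrac12\sigma_1 l_t}\int_0^t\e^{\tfrac12\sigma_1 l_s}\,dl_s\Bigr]
\le \frac{2}{\sigma_1+\eps}\bigl(\E[\e^{(\sigma_1+\eps)l_t}]-1\bigr)
\le \frac{2}{\sigma_1+\eps}\bigl(AB-1\bigr),
\end{align*}
with $A=\|h\|_\infty^{2(\sigma_1+\eps)}$, $B=\exp\bigl((\sigma_1+\eps)K_{h,2(\sigma_1+\eps)}t\bigr)$, then splits $AB-1=(A-1)+A(B-1)$ and uses $\e^x-1\le x\e^x$ on each piece. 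In both pieces the factor $1/(\sigma_1+\eps)$ cancels against the $(\sigma_1+\eps)$ in $\ln A$ and $\log B$, so the bound remains finite as $\eps\to 0$, and the limit is exactly $4\|h\|_\infty^{2\sigma_1}\ln\|h\|_\infty+2\|h\|_\infty^{2\sigma_1}\exp(\sigma_1 K_{h,2\sigma_1}t)K_{h,2\sigma_1}t$. The missing idea in your proposal is this algebraic splitting that makes the $\eps\to 0$ limit regular; without it, the $\epsilon$-perturbation must stay fixed and the stated constant cannot be reached.
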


\begin{proof}
By Lemma \ref{esti:local-time}, we have
\begin{align*}
 \E[\e^{\sigma_1 l_t}]
 \leq \E[\e^{\sigma_1 l_t}] \leq \|h\|_{\infty}^{2\sigma_1 }\exp \l(\sigma_1K_{h,2\sigma_1} t\r),
\end{align*}
and 
\begin{align*}
\E[\e^{\sigma_1 l_t}]\leq \|h\|_{\infty}^{2\sigma_1 }\exp \l(\sigma_1K_{h,2\sigma_1 } t\r).
\end{align*}
Moreover, we observe that
\begin{align*}
\E\l[\e^{\frac{1}{2}\sigma_1l_t}\int_0^t \e^{\frac{1}{2}\sigma_1l_s}\, d l_s\r]&\leq \frac{2(\E[\e^{(\sigma_1+\eps)l_t}]-1)}{\sigma_1+\eps}\\
&\leq \frac{2}{\sigma_1+\eps} \l(\|h\|_{\infty}^{2(\sigma_1+\eps)}\exp\l((\sigma_1+\eps)K_{h, 2(\sigma_1+\eps)}t\r)-1\r)\\
&\leq \frac{2}{\sigma_1+\eps}\l(\|h\|_{\infty}^{2(\sigma_1+\eps)}\exp\l((\sigma_1+\eps)K_{h, (\sigma_1+\eps)}t\r)-1\r)\\
&\leq \frac{2}{\sigma_1+\eps}\l(\|h\|_{\infty}^{2(\sigma_1+\eps)}-1\r)+\frac{2}{\sigma_1+\eps}\|h\|_{\infty}^{2(\sigma_1+\eps)}\l[\exp\l((\sigma_1+\eps)K_{h, 2(\sigma_1+\eps)}t\r)-1\r]\\
&\leq 4\|h\|_{\infty}^{2(\sigma_1+\eps)} \ln \|h\|_{\infty}+2\|h\|_{\infty}^{2(\sigma_1+\eps)}\exp\l((\sigma_1+\eps)K_{h, 2(\sigma_1+\eps)}t\r)K_{h, 2(\sigma_1+\eps)}t.
\end{align*}
Letting $\eps$ tend to $0$, we arrive at
\begin{align*}
\E\l[\e^{\frac{1}{2}\sigma_1l_t}\int_0^t \e^{\frac{1}{2}\sigma_1l_s}\, d l_s\r]&\leq 4\|h\|_{\infty}^{2\sigma_1} \ln \|h\|_{\infty}+2\|h\|_{\infty}^{2\sigma_1}\exp\l(\sigma_1K_{h, 2\sigma_1}t\r)K_{h,2\sigma_1}t.
\end{align*}
Therefore, combining this with Theorem \ref{th-esti-HS}, we obtain
\begin{align*}
\frac{\|\Hess \phi\|_{\infty}}{\|\bd \phi\|_{\infty}}&\leq  \e^{(\frac12\lambda +K_0)t}\l(\frac{1}{\sqrt{t}}+K_1 \sqrt{t}+\frac{K_2}{2} t \r)\|h\|_{\infty}^{2\sigma_1 }\exp \l(\sigma_1K_{h,2\sigma_1} t\r)\\
&\quad +\sigma_2\e^{(\frac12\lambda+K_0)t}\l[2 \ln \|h\|_{\infty}+K_{h,\sigma_1}t\r]\|h\|_{\infty}^{2\sigma_1 }\exp \l(\sigma_1K_{h,2\sigma_1} t\r)\\
&\leq  \e^{(\frac12\lambda +K_0)t}\l(\frac{1}{\sqrt{t}}+K_1 \sqrt{t}+\frac{K_2}{2} t \r)\|h\|_{\infty}^{2\sigma_1 }\exp \l(\sigma_1K_{h,2\sigma_1} t\r)\\
&\quad +\sigma_2\e^{(\frac12\lambda+K_0)t}\l[2 \ln \|h\|_{\infty}+K_{h,\sigma_1}t\r]\|h\|_{\infty}^{2\sigma_1 }\exp \l(\sigma_1K_{h,2\sigma_1} t\r).
\end{align*}
Let $t=\l(\lambda+2K_0+2\sigma_1K_{h,2\sigma_1 }\r)^{-1}$. Then we get
\begin{align*}
\frac{\|\Hess \phi\|_{\infty}}{\|\bd \phi\|_{\infty}} &\leq  \Bigg(\frac{K_1}{\sqrt{\lambda+2K_0+2\sigma_1K_{h,2\sigma_1}}}+\sqrt{\lambda+2K_0+2\sigma_1K_{h,2\sigma_1}}\\
&\qquad +\frac{K_2+2\sigma_2K_{h,\sigma_1}}{2(\lambda+2K_0+2\sigma_1K_{h,2\sigma_1 })}+2\sigma_2\ln\|h\|_{\infty} \Bigg)\|h\|_{\infty}^{2\sigma_1} \sqrt{ \e}.
\end{align*}
On the other hand,  from \cite{ATW}, it has already been shown that
\begin{align*}
\frac{\|\bd \phi \|_{\infty}}{\|\phi\|_{\infty}}\leq \frac{1}{\sqrt{t}}\E[\e^{\sigma_1 l_t}]^{1/2}\e^{\frac{1}{2}(K_0+\lambda)t}\leq  \frac{1}{\sqrt{t}}\|h\|_{\infty}^{\sigma_1 }\exp \l(\frac{1}{2}(\lambda+\sigma_1K_{h,2\sigma_1 } +K_0)t\r).
\end{align*}
Let $t=\l(\lambda+K_0+\sigma_1K_{h,2\sigma_1 }\r)^{-1}$. Then we get
\begin{align*}
\frac{\|\bd \phi \|_{\infty}}{\|\phi\|_{\infty}}\leq \sqrt{\lambda+K_0+\sigma_1K_{h,2\sigma_1 }} \|h\|_{\infty}^{\sigma_1} \sqrt{\e}.
\end{align*}
We then conclude that 
\begin{align*}
\frac{\|\Hess \phi\|_{\infty}}{\|\phi\|_{\infty}}& \leq  \Bigg(\lambda+K_1+2K_0+2\sigma_1K_{h,2\sigma_1} +\frac{K_2+2\sigma_2K_{h,\sigma_1}}{2\sqrt{\lambda+2K_0+2\sigma_1K_{h,2\sigma_1 }}}\\
&\qquad+2\sigma_2\ln\|h\|_{\infty}\sqrt{\lambda+K_0+\sigma_1K_{h,2\sigma_1 }} \Bigg)\|h\|_{\infty}^{3\sigma_1} \e.
\end{align*}

\end{proof}

\begin{proof}[Proof of Theorem \ref{Neumann-them}]
From the conditions we see that  Condition ${\bf (A)}$ is satisfied.
Then, the Hessian estimate of Neumann eigenfunctions in Theorem \ref{Neumann-th0}  remain valid by substituting the $h$ defined in \eqref{def-h}. Then  under replacing
$$ K_{h,\alpha}\ \mbox{ and }\ \|h\|_{\infty}\ $$
by
$$ {K}_{\alpha}:=\frac{n}{r_0}+\alpha\ \mbox{ and } \ \e^{n r_0/2}$$
respectively, the conclusion is just listed in Theorem \ref{Neumann-them}.
\end{proof}

\subsection*{Conflict of Interest and Ethics Statements}
On behalf of all authors, the corresponding author Li-Juan Chen declares that there is no conflict of interest. Data sharing is not applicable to this article as no datasets were created or analysed in this study.

\bibliographystyle{amsplain}%

\bibliography{Hessian-eigenfunctions}

\end{document}